\documentclass[a4paper]{amsart}
\usepackage[T1]{fontenc}
\usepackage{graphicx}
\usepackage{verbatim}
\usepackage{tikz} 
\usepackage{enumerate}
\usepackage{pdfsync}
\usetikzlibrary{calc} 


\listfiles
\newtheorem{theorem}{Theorem}

\newtheorem{lemma}[theorem]{Lemma}
\newtheorem{proposition}[theorem]{Proposition}
\newtheorem{corollary}[theorem]{Corollary}
\newtheorem{definition}{Definition}

\theoremstyle{remark}
\newtheorem*{remark}{Remark}

\newcommand{\E}[1]{\mathbf{E}_{\Lambda}\left[#1\right]}

\def\P{{\mathbf P}}
\def\Pr{\P}
\newcommand{\indicator}[1]{\mathbf{1}_{ {#1} }}
\newcommand{\stirling}[2]{\left\{\mbox{\hspace{-1.5mm}}\begin{array}{c}#1\\ #2\end{array}\mbox{\hspace{-1.5mm}}\right\}}
\def\d{{\text{ d}}}
\def\esp#1{\E#1}
\def\R{{\mathbb R}}
\def\N{{\mathbb N}}
\newcommand{\opcov}{\operatorname{Cov}}
\newcommand{\opvar}{\operatorname{Var}}
\newcommand{\dom}{\operatorname{Dom}}

\newcommand{\var}[1]{\opvar_{\Lambda}\left[#1\right]}
\newcommand{\cov}[1]{\opcov_{\Lambda}\left[#1\right]}
\renewcommand\SS{{\mathfrak S}}
\newcommand\U{{\mathcal U}}

\begin{document}

\title{Simplicial Homology of Random Configurations}
\author{L. Decreusefond } \thanks{L. Decreusefond was partially
  supported by ANR Masterie.}  \thanks{The authors would like to thank
  the anonymous referee for his/her thorough reading which helped us
  to improve the presentation of the current article.}
\email{laurent.decreusefond@telecom-paristech.fr} \author{E. Ferraz}
\email{eduardo.ferraz@telecom-paristech.fr}
\author{H. Randriambololona }
\email{hugues.randriam@telecom-paristech.fr} \author{A. Vergne}
\email{anais.vergne@telecom-paristech.fr} \address{Institut Telecom,
  Telecom ParisTech, CNRS LTCI, 46, rue Barrault, Paris - 75634,
  France} \subjclass[2010]{60G55,60H07,55U10} \keywords{ \u{C}ech
  complex, Concentration inequality, Homology, Malliavin calculus,
  point processes, Rips-Vietoris complex}

\begin{abstract}
  Given a Poisson process on a $d$-dimensional torus, its random
  geometric simplicial complex is the complex whose vertices are the
  points of the Poisson process and simplices are given by the
  \u{C}ech complex associated to the coverage of each point. By means
  of Malliavin calculus, we compute explicitly the three first order
  moments of the number of $k$-simplices, and provide a way to compute
  higher order moments.  Then, we derive the mean and the variance of
  the Euler characteristic.  Using the Stein method, we estimate the
  speed of convergence of the number of occurrences of any connected
  subcomplex converges towards the Gaussian law when the intensity of
  the Poisson point process tends to infinity. We use a concentration
  inequality for Poisson processes to find bounds for the tail
  distribution of the Betti number of first order and the Euler
  characteristic in such simplicial complexes.

\end{abstract}
\maketitle
\section{Motivation}

Algebraic topology is the domain of mathematics in which the
topological properties of a set are analyzed through algebraic tools.
Initially developed for the classification of manifolds, it is by now
heavily used in image processing and geometric data analysis. More
recently, applications to sensor networks were developed
in~\cite{silvacontrol,ghrist}. Imagine that we are given a bounded
domain in the plane and sensors which can detect intruders within a
fixed distance. The so-called coverage problem consists in determining
whether the domain is fully covered, i.e. whether there is any part of
the domain in which an intrusion can occur without being detected. The
mathematical set which is to be analyzed here is the union of the
balls centered on each sensor.  If this set has no hole then the
domain is fully covered. It turns out that algebraic topology provides
a computationally effective procedure to determine whether this
property holds. In view of the rapid development of the technology of
sensor networks \cite{kahn,lewis,pottie}, which are small and cheap
devices with limited capacity of autonomy and communications, devoted
to measure some local physical quantity (temperature, humidity,
intrusion, etc.), this kind of question is likely to become recurrent.

The coverage problem, via homology techniques, for a set of sensors
was first addressed in the papers \cite{silvacontrol,ghrist}. The
method consists in calculating from the geometric data, a
combinatorial object known as a simplicial complex which is a list of
points, edges, triangles, tetrahedron, etc. satisfying some
compatibility conditions: all the faces of a $k$-simplex ($k=0$ means
points, $k=1$ means edges, etc.) of the complex must belong to the set
of $(k-1)$-simplices of the complex. Then, an algebraic structure on
these lists and linear maps, known as boundary operators, are
constructed. Some of the topological properties (like connectivity and
coverage) are given by the so-called Betti numbers which
mathematically speaking are dimension of some quotient vector spaces
(see below). Another key parameter is the alternated sum of the Betti
numbers, known as Euler characteristic which gives some information on
the global topology of the studied set.  Persistence homology
\cite{MR2279866,MR1949898} is both a way to compute the Betti numbers
avoiding a (frequent) combinatorial explosion and a way to detect the
robustness of the topological properties of a set with respect to some
parameter: For instance, in the intrusion detection setting, how the
connectivity of the covering domain is altered by variations of the
detection distance.

When points (i.e. sensors) are randomly located in the ambient space,
may it be $\R^d$ or a manifold, it is natural to ask about the
statistical properties of the Betti numbers and the Euler
characteristic.  We completely solved the problem in one dimension
(see \cite{DF:JPS-11}) by basic methods inspired by queuing theory,
without using the forthcoming sophisticated tools of algebraic
topology. Since we cannot order points in $\R^d$, it is not possible
to generalize the results obtained in this earlier work to higher
dimension.

A very few papers deal with the properties of random simplicial
complexes. In~\cite{MR2770552} and \cite{MR1986198}, for Binomial
point processes whose number of points are going to infinity, the
asymptotic regimes of the mean value of the Betti numbers and
simplices numbers are investigated. In \cite{Kahle2010}, these results
are refined by providing Poisson and Gaussian approximations of the
Betti numbers in asymptotic regimes.

As will be apparent below, for the Rips-Vietoris simplicial complex,
the number of $k$-simplices boils down to the number of
$(k+1)$-cliques of the underlying graph. As we mainly analyze this
kind of complex, our work has thus strong links with the pioneering
work of Penrose \cite{MR1986198} and with \cite{MR1193616,Penrose2011}
as well. In \cite{MR1193616}, for Poisson input, the limiting regimes
of the number of $k$-simplices on a square of size $a$, are
investigated through limit theorems of U-statistics. The size of the
square is growing to infinity with a constant mean number of points
per unit of surface.  In \cite{Penrose2011}, there is an extension of
the latter result to non-linear manifolds and non-uniform
distributions of the points.
In the above cited papers, sophisticated combinatorial arguments are
at the root of the arguments of the convergence theorems. We here
replace this line of thought by a functional analytic approach which
transfers the difficulty to the computation of a (possibly involved)
deterministic integral. By doing so, we can, in principle, obtain a
CLT for the number of occurrences of any connected sub-complex and not
only for cliques.

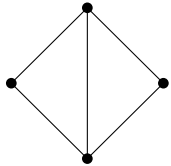
\begin{figure}[!ht]
  \label{fig_chi_mean_v2:not_clique}
  \centering
  \begin{tikzpicture}
    \draw (0,0) -- (1,1) -- (2,0) -- (1,-1) -- cycle; \draw (1,1) --
    (1,-1); \fill (0,0) circle(2pt); \fill (1,1) circle(2pt); \fill
    (1,-1) circle(2pt); \fill (2,0) circle(2pt);
  \end{tikzpicture}
  \caption{A sub-complex which is not a clique}
\end{figure}

One of our main contributions are exact formulas for the first three
moments of the number of simplices for both the Poisson and the
Binomial processes at the price of working on a square bounded domain,
which we embed into a torus in order to avoid side effects. The
rationale behind this simplification is that when the size of the
covering balls is small compared to the size of the square, the
topology of the two sets (the union of balls in the square and the
union of balls in the corresponding torus) must be similar. Our method
could be generalized to compute the moments of any order but the
computations become more and more tricky as the order increases. We
also investigate the properties of the moments of the Euler
characteristic.  Moreover, by using Malliavin calculus, we go further
than the previously cited works since we can evaluate the speed of
convergence in the CLT. We also give a concentration inequality to
bound the distribution tail of the first Betti number.

During the final preparation of this paper, we learned that such an
approach was used independently in \cite{Reitzner2011} for the
analysis of $U$-statistics (functionals of a fixed chaos in our
vocabulary) of Poisson processes. Both approaches rely on the ideas
which appeared in \cite{Decreusefond:2008jc,Nourdin:2012fk}.

Our method goes as follows: We write the numbers of $k$-simplices
(i.e. points, edges, triangles, tetrahedron, etc) as iterated
integrals with respect to the underlying Poisson process. Then, the
computation of the means is reduced to the computation of
deterministic iterated integrals thanks to Campbell formula. By using
the definition of the Euler characteristic as an alternating sum of
the numbers of simplices, we find its expectation. The point is that
even if the summing index goes to infinity, the expectation of $\chi$
depends only on the $d$-th power of the intensity of the Poisson
process where $d$ is the dimension of the underlying space.  By
depoissonization, we obtain the exact values of the mean number of
simplices of any order and then the mean value of the Euler
characteristic for Binomial processes.  Using the multiplication
formula of iterated integrals, one can reproduce the same line of
thought for higher order moments to the price of an increased
complexity in the computations. We obtain closed form formulas for the
variance of the number of $k$-simplices and of the Euler
characteristic and a series expansion for third order moments. Our
method is a priori suitable for any higher order moments but the
computations become much involved. Using Stein's method mixed with
Malliavin calculus, we generalize the results of~\cite{MR1986198} by
proving a precise (i.e. with speed of convergence) CLT for
sub-complexes count. As expected, the speed of convergence is of the
order of $\lambda^{-1/2}$.

The paper is organized as follows: Sections 2 and 3 are primers on
algebraic topology and Malliavin calculus. In Section 4, the average
number of simplices and the mean of the Euler characteristic are
computed. This is sufficient to bound the tail distribution of
$\beta_0$ using concentration inequality.  Section 5 applies the
Malliavin calculus in order to find the explicit expression of
second order moments of the number of $k$-simplices and the Euler
characteristic. Using the same strategy, in Section 6, we find the
expression for the third order moment of the number of simplices.  In
Section 7, we prove a central limit theorem for the number of
occurrences of a finite simplex into a Poisson random geometric
complex.

\section{Algebraic Topology}
\label{sec:algebraic-topology}
For further reading on algebraic topology,
see~\cite{armstrong,hatcher,munkres}. Graphs can be generalized to
more generic combinatorial objects known as simplicial
complexes. While graphs model binary relations, simplicial complexes
represent higher order relations. Given a set of vertices $V$, a
$k$-simplex is an unordered subset $\{v_0,\,v_1,\,\dots,\, v_k\}$
where $v_i\in V$ and $v_i\not=v_j$ for all $i\not=j$. The faces of the
$k$-simplex $\{v_0,\, v_1,\, \dots,\, v_k\}$ are defined as all the
$(k-1)$-simplices of the form $\{v_0,\,\dots,\,v_{j-1},\,v_{j+1},\,
\dots,\, v_k\}$ with $0\leq j\leq k$. A simplicial complex $\mathcal
C$ is a
collection of simplices which is closed with respect to the inclusion
of faces, i.e. if $\{v_0,\, v_1,\, \dots,\, v_k\}$ is a $k$-simplex
then all its faces are in the set of $(k-1)$-simplices.

One can define an orientation on simplices by defining an order on
vertices and with the convention that:
\begin{eqnarray*}
  [v_0,\, \dots,\, v_i,\, \dots,\, v_j,\, \dots,\, v_k]=-[\, v_0,\,
  \dots,\, v_j,\, \dots,\, v_i,\,\dots,\, v_k],
\end{eqnarray*}
for $0 \leq i,j \leq k$.

For each integer $k$, $\mathcal C_k$ is the vector space spanned by the set
of oriented $k$-simplices of ${\mathcal V}$. For any integer $k$, the boundary
map $\partial_k$ is the linear transformation $\partial_k\, :\,
\mathcal C_k\rightarrow \mathcal C_{k-1}$ which acts on basis elements
$[v_0,\dots,v_k]$ as
\begin{eqnarray*}
  \label{eq:erased}
  \partial_k [v_0,\,\dots,\, v_k]= \sum_{i=0}^{k}(-1)^i
  [v_0,\,\dots,\, v_{i-1},\, v_{i+1},\, \dots,\,v_k],
\end{eqnarray*}
and $\partial_0$ is the null function. Examples of such operations are
given in Table~\ref{fig. boundary}.

\begin{table}[!ht]
  \begin{tabular}{ccc}
    \begin{tikzpicture}[scale=0.5, font=\tiny]
      \draw[color=black] (0,0) -- (1,1) -- (2,0); \draw[-stealth]
      (0,0) -- (.5,.5); \draw[-stealth] (1,1) -- (1.5,.5); \filldraw
      [color=black] (0,0) node[below] {$v_0$} circle (.1); \filldraw
      [color=black] (1,1) node[above] {$v_1$} circle (.1); \filldraw
      [color=black] (2,0) node[below] {$v_2$} circle (.1); \filldraw
      [color=black] (3,0) node[below] {$v_0$} node[above] {$-$}
      circle (.1); \filldraw [color=black] (5,0) node[below] {$v_2$}
      node[above] {$+$ } circle (.1); \node at (2,-2)
      {$[v_0,v_1]+[v_1,v_2]\stackrel{\partial}{\longrightarrow}
      [v_2]-[v_0]$};
    \end{tikzpicture}
    &
    \begin{tikzpicture}[scale=0.5, font=\tiny]
      \filldraw[color=blue!50, draw=black] (0,0) -- (1,1) -- (2,0)--
      cycle; \draw[draw=black] (3,0) -- (4,1) -- (5,0)-- cycle;
      \draw[->] (1,.2) arc (270:-30:.2); \draw[-stealth] (0,0) --
      (.5,.5); \draw[-stealth] (1,1) -- (1.5,.5); \draw[-stealth]
      (2,0) -- (1,0); \draw[-stealth] (3,0) -- (3.5,.5);
      \draw[-stealth] (4,1) -- (4.5,.5); \draw[-stealth] (5,0) --
      (4,0); \filldraw [color=black] (0,0) node[below] {$v_0$}
      circle (.1); \filldraw [color=black] (1,1) node[above] {$v_1$}
      circle (.1); \filldraw [color=black] (2,0) node[below] {$v_2$}
      circle (.1); \filldraw [color=black] (3,0) node[below] {$v_0$}
      circle (.1); \filldraw [color=black] (4,1) node[above] {$v_1$}
      circle (.1); \filldraw [color=black] (5,0) node[below] {$v_2$}
      circle (.1); \node at (2,-2)
      {$[v_0,v_1,v_2]\stackrel{\partial}{\longrightarrow}
      [v_1,v_2]-[v_0,v_2]$}; \node at (3.3,-2.7) {$+[v_0,v_1]$};
    \end{tikzpicture}
    &
    \begin{tikzpicture}[scale=0.5, font=\tiny]
      \filldraw[color=blue!40, draw=black] (0,0) -- (0,1.5) --
      (-1.5,.75)-- cycle; \filldraw[color=blue!60, draw=black] (0,0)
      -- (0,1.5) -- (1,.75)-- cycle; \draw[->] (-.6,1) arc
      (110:-150:.15 and .3); \draw[->] (.4,.5) arc (235:-25:.1 and
      .3); \filldraw [color=black] (0,0) node[below] {$v_0$} circle
      (.1); \filldraw [color=black] (0,1.5) node[above] {$v_1$}
      circle (.1); \filldraw [color=black] (-1.5,.75) node[left]
      {$v_2$} circle (.1); \filldraw [color=black] (1,.75)
      node[below] {$v_3$} circle (.1);

      \draw[dashed] (2.5,.75) -- (5,.75); \fill[color=blue!40,
      opacity=.4] (5,.75) -- (4,1.5) -- (2.5,.75)-- cycle;
      \filldraw[color=blue!40, draw=black, opacity=.4] (4,0) --
      (4,1.5) -- (2.5,.75)-- cycle; \filldraw[color=blue!80,
      draw=black, opacity=.4] (4,0) -- (4,1.5) -- (5,.75)-- cycle;

      \node at (-.1,-.8) {Filled}; \node at (3.9,-.8) {Empty};

      \draw[->, densely dotted] (4.1,.85) arc (-40:220:.25 and .15);
      \draw[->, densely dotted] (4.2,.3) arc (-40:220:.3 and .15);
      \draw[->] (3.4,1) arc (110:-150:.15 and .3); \draw[->]
      (4.4,.5) arc (235:-25:.1 and .3);

      \filldraw [color=black] (4,0) node[below] {$v_0$} circle (.1);
      \filldraw [color=black] (4,1.5) node[above] {$v_1$} circle
      (.1); \filldraw [color=black] (2.5,.75) node[below] {$v_2$}
      circle (.1); \filldraw [color=black] (5,.75) node[right]
      {$v_3$} circle (.1);

      \node[anchor=east] at (2.5,-3.05)
      {$[v_0,v_1,v_2,v_3]\stackrel{\partial}{\longrightarrow}$};
      \node at (4,-2) {$ +[v_1,v_2,v_3]$}; \node at (4,-2.7)
      {$-[v_0,v_2,v_3]$}; \node at (4,-3.4) {$+[v_0,v_1,v_3]$};
      \node at (4,-4.1) {$-[v_0,v_1,v_2]$};

    \end{tikzpicture}\\
    a) & b) & c)
  \end{tabular}
  \caption{Examples of boundary maps. From left to right. An
    application over 1-simplices. Over a 2-simplex. Over a 3-simplex,
    turning a filled tetrahedron to an empty one.}
  \label{fig. beta}
  \label{fig. boundary}
\end{table}
These maps give rise to a chain complex: a sequence of vector spaces
and linear transformations:
\begin{eqnarray*}
  \dots \stackrel{\partial_{k+2}}{\longrightarrow}\mathcal C_{k+1}
  \stackrel{\partial_{k+1}}{\longrightarrow}\mathcal C_{k}
  \stackrel{\partial_{k}}{\longrightarrow}\dots
  \stackrel{\partial_2}{\longrightarrow}\mathcal C_{1}
  \stackrel{\partial_1}{\longrightarrow}\mathcal C_{0}.
\end{eqnarray*}
A standard result then asserts that for any integer $k$,
$\partial_k\circ\partial_{k+1}=0.$ If one defines
$Z_k=\ker \partial_k \text{ and } B_k=\mathrm{im}
\, \partial_{k+1},$ this induces that $B_k\subset Z_k.$

\begin{figure}[!h]
  \centering
  \begin{tikzpicture}[font=\small]
    \draw (0,0) circle (1 and 1.5); \draw (-3,0) circle (1 and 1.5);
    \draw (3,0) circle (1 and 1.5); \filldraw[color=blue!60,
    draw=black] (0,0) circle (.8 and 1.2); \filldraw[color=white,
    draw=black] (0,0) circle (.5 and .75); \fill[color=black] (0,0)
    node[below] {0} circle (0.05); \fill[color=black] (3,0)
    node[below] {0} circle (0.05); \fill[color=black] (-3,0)
    node[below] {0} circle (0.05); \node[above] at (80:1 and 1.5)
    {$\mathcal C_k$}; \node[below] at (90:.8 and 1.2) {$Z_k$};
    \node[below] at (90:.5 and .75) {$B_k$}; \node[above] at
    ($(-3,0)+(80:1 and 1.5)$) {$\mathcal C_{k+1}$}; \node[above] at
    ($(3,0)+(80:1 and 1.5)$) {$\mathcal C_{k-1}$}; \draw ($(-3,0)+(80:1 and
    1.5)$) -- ($(90:.5 and .75)$); \draw ($(-3,0)+(-80:1 and 1.5)$) --
    ($(-90:.5 and .75)$); \draw ($(80:.8 and 1.2)$) -- ($(3,0)$);
    \draw ($(-80:.8 and 1.2)$) -- ($(3,0)$); \draw ($(80:.5 and .75)$)
    -- ($(3,0)$); \draw ($(-80:.5 and .75)$) -- ($(3,0)$); \node at
    (-1.5,0) {$\stackrel{\partial}{\longrightarrow}$}; \node at
    (1.5,0) {$\stackrel{\partial}{\longrightarrow}$};
  \end{tikzpicture}
  \label{fig: sets}
  \caption{A chain complex showing the sets $\mathcal C_k$, $Z_k$ and
    $B_k$.}
\end{figure}
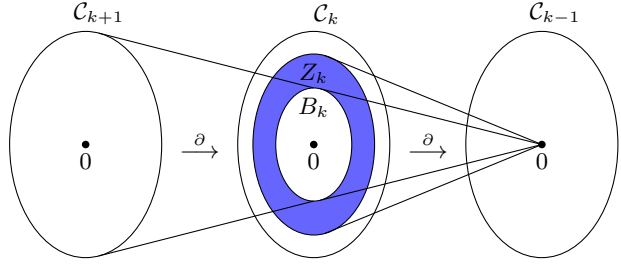

The $k$-th homology group of ${\mathcal C}$, denoted by $H_k$, is the quotient
vector space,
\begin{math}
  \label{eq:erased2}
  H_k={Z_k}/{B_k}
\end{math}
and the $k$-th Betti number of ${\mathcal C}$ is its dimension:
$\beta_k=\dim H_k=\dim Z_k-\dim B_k.$

The simplicial complexes we consider are of a special type since they
are built on topological rules.
\begin{definition}
  Given $\mathcal{U}=(U_v,\, v\in {\omega})$ a collection of open
  sets of some topological space $X$, the \u{C}ech complex of $\mathcal{U}$ denoted by
  $\mathcal{C}(\mathcal{U}),$ is the abstract simplicial complex whose
  $k$-simplices correspond to $(k+1)$-tuples of distinct elements of
  $\mathcal{U}$ that have non empty intersection, so $\{v_0,\,v_1,\,
  \dots,\,v_k\}$ is a $k$-simplex if and only if $\bigcap_{i=0}^k
  {U_{v_i}}\not=\emptyset$.
\end{definition}
For the applications we have in mind, the set $U_v$ is meant to be the covered zone by the sensor
located at $v$. For the sake of tractability, it is supposed to be
a ball centered at $v$ with a fixed radius. As said earlier, in order to avoid side effects, we work on the
$d$-dimensional torus of length $a$, denoted by $\mathbb{T}_a^d$ and
to simplify the computations, we consider the $l^\infty$
distance. Namely, the torus is defined as the quotient of the
action of the group of translations $a{\mathbb Z}^d$ on $\R^d$,
i.e. $\mathbb{T}_a^d=\R^d/a{\mathbb Z}^d$. The space $X=[0,\, a)^d$
can be embedded in $\mathbb{T}_a^d$ as a fundamental domain of this
action. If we equip $X$ with the distance
\begin{equation*}
  {\rho_d}(x,\, y)=\inf_{k\in {\mathbb Z}^d} \| x-y+ka\|_\infty
\end{equation*}
where $\|x\|_\infty$ is the $l^\infty$-norm in $\R^d$, then the
embedding of $X$ into the torus is a bijective isometry. One can thus
identify these two spaces and use the representation which is the most
convenient according to the situation.
\begin{definition}
  Given $\omega$ a finite set of points on the torus
  $\mathbb{T}_a^d$. For $\epsilon >0$, we define
  $\mathcal{U}_\epsilon(\omega)=\{B_{\rho_d}(v,\, \epsilon),\, v\in
  \omega\}$ and
  $\mathcal{C}_{\epsilon}(\omega)=\mathcal{C}(\mathcal{U}_\epsilon(\omega))$,
  where $B_{\rho_d}(x,\, \epsilon)=\{y\in \mathbb{T}_a^d,\, {\rho_d}(x,\, y) < \epsilon\}$.
 \end{definition}
The following result is known for $\R^d$, there is a slight
modification of the proof for the torus.
\begin{theorem}
  \label{th: cech}
  Suppose $\epsilon<a/4$. Then $\mathcal{C}_{\epsilon}(\omega)$ has
  the same homology vector spaces as $\U_\epsilon(\omega)$. In
  particular they have the same Betti numbers.
\end{theorem}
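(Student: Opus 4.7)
The natural approach is to invoke the Nerve Lemma, which states that if $\mathcal{U}$ is an open cover of a paracompact space in which every non-empty finite intersection of members is contractible (a \emph{good cover}), then the nerve of $\mathcal{U}$ is homotopy equivalent to $\bigcup \mathcal{U}$. Since $\mathcal{C}_\epsilon(\omega)$ is by definition the nerve of $\mathcal{U}_\epsilon(\omega)$, the theorem reduces to checking that $\mathcal{U}_\epsilon(\omega)$ is a good cover of its union. On $\mathbb{R}^d$ this is immediate: the balls $B_{d_\infty}(v,\epsilon)$ are open axis-aligned cubes, which are convex, so every non-empty intersection is convex and therefore contractible. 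The only thing specific to the torus is to transfer this convexity argument through the covering projection $\pi : \mathbb{R}^d \to \mathbb{T}_a^d$, and this is where the hypothesis $\epsilon < a/4$ intervenes.

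My plan is to show that both single balls and non-empty finite intersections on $\mathbb{T}_a^d$ are homeomorphic, via $\pi$, to convex subsets of $\mathbb{R}^d$. For a single ball, the bound $2\epsilon < a/2$ ensures that the cube $[v-\epsilon,v+\epsilon] \subset \mathbb{R}^d$ is embedded in a fundamental domain, so $\pi$ restricts to a homeomorphism onto $B_{d_\infty}(v,\epsilon)$, giving contractibility. For a non-empty intersection $I = \bigcap_{i=0}^k B_{d_\infty}(v_i,\epsilon)$, pick any $p \in I$ and a lift $\tilde p \in \mathbb{R}^d$. Because the set of lifts of each $v_i$ forms a lattice of mesh $a$, and because $2\epsilon < a/2 < a$, there is a unique lift $\tilde v_i$ with $\|\tilde v_i - \tilde p\|_\infty < \epsilon$. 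Then $\tilde I := \bigcap_i B_{d_\infty}(\tilde v_i,\epsilon)$ is a non-empty convex subset of $\mathbb{R}^d$ containing $\tilde p$, hence contractible.

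The key technical step, and the main obstacle, is to verify that $\pi$ restricts to a homeomorphism $\tilde I \to I$: one must exclude the possibility that $I$ receives contributions from several sheets of the cover, i.e., from lifts of the $v_i$'s other than the chosen $\tilde v_i$'s. This is precisely what $\epsilon < a/4$ buys: the diameter of $\tilde I$ in $d_\infty$ is at most $2\epsilon < a/2$, which is below the injectivity radius of $\pi$, so $\pi|_{\tilde I}$ is injective. A short check shows that $\pi(\tilde I) = I$, since any point of $I$ has a lift in some $B_{d_\infty}(\tilde v_i + \alpha_i, \epsilon)$ with $\alpha_i \in a\mathbb{Z}^d$, but the intersection of these shifted cubes is empty unless all $\alpha_i$ agree, in which case it is a translate of $\tilde I$.

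Once homeomorphism is established, contractibility of $\tilde I$ transfers to $I$, so $\mathcal{U}_\epsilon(\omega)$ is a good cover of $\mathcal{U}_\epsilon(\omega) := \bigcup_v B_{d_\infty}(v,\epsilon)$, and the Nerve Lemma yields $\mathcal{C}_\epsilon(\omega) \simeq \mathcal{U}_\epsilon(\omega)$. Equality of Betti numbers then follows from their homotopy invariance.
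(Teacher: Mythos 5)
Your proposal is correct and follows essentially the same route as the paper: both invoke the nerve lemma and verify the good-cover condition by observing that, since $\epsilon<a/4$, every non-empty intersection of balls $B_{d_\infty}(v,\epsilon)$ lies in a region of the torus identifiable with a subset of $\mathbb{R}^d$, under which identification the balls become convex cubes and the intersection is convex, hence contractible. Your explicit covering-space bookkeeping (uniqueness of the lifts $\tilde v_i$ and exclusion of contributions from other sheets of $\pi$) is just a more detailed rendering of the paper's one-line identification of a small ball around a point of the intersection with a Euclidean cube.
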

\begin{proof}
  This will follow from the so-called nerve lemma of Leray, as stated
  in \cite[Theorem 7.26]{rotman} or \cite[Theorem 10.7]{bjorner}.  One
  only needs to check that any non-empty intersection of sets
  $B_{\rho_d}(v,\epsilon)$ is contractible.

  Consider such a non-empty intersection, and let $x$ be a point
  contained in it. Then, since $\epsilon<a/4$, the ball
  $B_{\rho_d}(x,\, 4\epsilon)$ can be identified with a cube in the
  Euclidean space. Then each $B_{\rho_d}(v,\, \epsilon)$ containing
  $x$ is contained in $B_{\rho_d}(x,4\epsilon)$, hence also becomes
  a cube with this identification, hence convex. Then the intersection
  of these convex sets is convex, hence contractible.
\end{proof}
For any finite sets of points $\omega$ of the torus, according to the
geometrical definition of the \u{C}ech complex, the Betti numbers have a
geometrical meaning:  $\beta_0(\omega)$ (with obvious notations) is
the number of connected components and for $k\ge 1$, $\beta_k(\omega)$ is
the number of $k$-dimensional holes of~$\mathcal{U}_\epsilon(\omega)$. For $k=0$ and $k=1$, an
intuitive explanation can be given. By definition, $\beta_0(\omega)$ is the
number of points minus the number of independent edges. Each time
there exists a cycle with $n$ points, we can remove an edge without
altering $\beta_0(\omega)$ since there are $n-1$ independent edges in such
a cycle. Doing this repeatidly, one can reduce the original graph to
as many linear chains of edges as there are connected components. A
linear chain of edges which contains $n$ points has $n-1$ edges, hence
a $\beta_0(\omega)$ equal to $1$. Thus $\beta_0(\omega)$ counts the number of
connected components. As to $\beta_1(\omega)$, we remark that
$\ker \partial_1$ is composed by the cycles and that $B_1$ is the
set of linear combinations of edges forming triangles, hence $\beta_1(\omega)$
is the number of cycles which are not triangles, hence  represents the number of ``coverage'' holes. 
The well known topological invariant named Euler characteristic for
${\mathcal U}_\epsilon(\omega)$, denoted by $\chi(\omega)$, is an integer defined by:
\begin{eqnarray*}
  \label{eq: chidef1}
  \chi(\omega)=\sum_{i=0}^{\infty}(-1)^i\beta_i(\omega).
\end{eqnarray*}
Let $s_k(\omega)$ be the number of $k$-simplices in the simplicial complex
$\mathcal C_\epsilon(\omega)$. A well known theorem states that this is also given by:
\begin{eqnarray*}
  \label{eq: chidef2}
  \chi(\omega)=\sum_{i=0}^{\infty}(-1)^is_i(\omega).
\end{eqnarray*}
\begin{definition}
  Let $\omega$ be a finite set of points in ${\mathbb T}_a^d$.  For
  any $\epsilon>0$, the Rips-Vietoris complex of $\omega$,
  $\mathcal{R}_{\epsilon}(\omega)$, is the abstract simplicial complex whose
  $k$-simplices correspond to unordered $(k+1)$-tuples of points in
  $\omega$ which are pairwise within distance less than $\epsilon$ of
  each other.
\end{definition}
\begin{lemma}
  \label{lem: rips-cech}
  For the torus $\mathbb{T}^d_{a}$ equipped with the product distance
  $\rho_d$, the Rips-Vietoris complex
  $\mathcal{R}_{2\epsilon}(\omega)$ has the same Betti numbers as the
  \u{C}ech complex $\mathcal{C}_{\epsilon}(\omega)$.
\end{lemma}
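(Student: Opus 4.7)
The plan is to prove the stronger statement that $\mathcal{R}_\epsilon(\omega)$ and $\mathcal{C}_{2\epsilon}(\omega)$ are isomorphic as abstract simplicial complexes, which of course implies homotopy equivalence. The heart of the matter is that $d_\infty$-balls are axis-aligned open cubes and satisfy a Helly-type property.

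First I would establish the following lemma: a finite collection of $d_\infty$-balls in Euclidean space has non-empty common intersection if and only if every pair of them has non-empty intersection. The proof is immediate from the product structure of cubes, which makes the intersection factor coordinate-wise; it suffices to prove the statement for open intervals on the real line, where $\bigcap_i (a_i, b_i) \neq \emptyset$ iff $\max_i a_i < \min_i b_i$, a condition visibly determined by pairs.

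Applying this Helly property to the balls $B_{d_\infty}(v_i, 2\epsilon)$, one sees that $\mathcal{C}_{2\epsilon}(\omega)$ is the flag (clique) complex of its own 1-skeleton: a $(k+1)$-tuple supports a $k$-simplex iff every pair of its vertices does. Since $\mathcal{R}_\epsilon(\omega)$ is by definition also the flag complex of its 1-skeleton, the lemma reduces to matching the two 1-skeletons, which follows by unraveling the pairwise \u{C}ech and pairwise Rips conditions in terms of the $d_\infty$-distance between centres.

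On the torus, one must be careful that no cube $B_{d_\infty}(v, 2\epsilon)$ ``wraps around'' itself, so that the Helly property transfers from Euclidean space. This is controlled by the same smallness hypothesis on $\epsilon$ used in Theorem~\ref{th: cech}: under that hypothesis any non-empty pairwise intersection of the relevant cubes sits inside a common chart isometric to a piece of $\mathbb{R}^d$, and the Euclidean Helly argument applies verbatim inside this chart. The main obstacle, then, is making this no-wrap-around argument precise; once it is in place, the coordinate-wise reduction gives the Helly property on $\mathbb{T}^d_a$ and the two complexes coincide.
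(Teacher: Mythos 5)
Your approach is exactly the one contained in the reference the paper points to: the paper itself gives no proof of this lemma, only the remark that the argument of Ghrist--de Silva carries over, and that argument is precisely your Helly-number-two property for axis-aligned boxes (reduce coordinate-wise to intervals, where pairwise intersection of $(a_i,b_i)$ forces $\max_i a_i<\min_i b_i$), which makes the \u{C}ech complex of $d_\infty$-balls the flag complex of its $1$-skeleton and hence a Rips complex. Your observation that some smallness condition on $\epsilon$ is needed on the torus so that the relevant cubes live in a common Euclidean chart is also correct, and is in fact a hypothesis the lemma as stated omits.

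There is, however, one concrete point at which your final step does not close as written. With the paper's normalization, $B_{d_\infty}(v_i,2\epsilon)\cap B_{d_\infty}(v_j,2\epsilon)\neq\emptyset$ if and only if $\Vert v_i-v_j\Vert_\infty<4\epsilon$, whereas the Rips condition for $\mathcal{R}_\epsilon(\omega)$ is $\Vert v_i-v_j\Vert_\infty<\epsilon$; so when you ``unravel the pairwise conditions'' the two $1$-skeletons do \emph{not} coincide, and your claimed isomorphism $\mathcal{R}_\epsilon(\omega)\cong\mathcal{C}_{2\epsilon}(\omega)$ fails at the level of edges. The identity your argument actually proves is $\mathcal{R}_{2\epsilon}(\omega)=\mathcal{C}_{\epsilon}(\omega)$ (equivalently $\mathcal{R}_{\epsilon}=\mathcal{C}_{\epsilon/2}$), which is what the paper in fact uses immediately after the lemma, where a $(k-1)$-simplex of $\mathcal{C}_\epsilon$ is characterized by the points being pairwise within $2\epsilon$. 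So the mismatch is a scaling slip in the statement of the lemma rather than a flaw in your method, but you should either prove the corrected identity or explicitly note that you are doing so; as literally stated, the lemma is not what the Helly argument yields.
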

The proof is given in~\cite{ghrist} in a slightly different context,
but it is easy to check that it works here as well. It must be pointed
out that \u{C}ech and Rips-Vietoris simplicial complexes can be
defined similarly for any distance on $\mathbb{T}^d_{a}$ but it is
only for the product distance that the homology vector spaces of both
complexes coincide.

\begin{proposition}
  \label{prop: beta_d}
  Let $\omega\in \mathbb{T}^d_{a}$ be a set of points, generating the
  simplicial complex $\mathcal{C}_{\epsilon}(\omega)$. Then, if $i>d$,
  $\beta_i(\omega)=0$.
\end{proposition}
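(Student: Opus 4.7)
The plan is to use Theorem~\ref{th: cech} to transfer the question from the abstract complex $\mathcal{C}_\epsilon(\omega)$ to the geometric coverage $\mathcal{U}_\epsilon(\omega)=\bigcup_{v\in\omega}B_{d_\infty}(v,\epsilon)$, and then exploit the fact that $\mathcal{U}_\epsilon(\omega)$ is an open subset of a $d$-dimensional manifold. By Theorem~\ref{th: cech}, the two spaces have the same Betti numbers, so it suffices to prove $\beta_i(\mathcal{U}_\epsilon(\omega))=0$ for every $i>d$.

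Being open in the smooth $d$-dimensional manifold $\mathbb{T}_a^d$, the set $\mathcal{U}_\epsilon(\omega)$ is itself a (paracompact) smooth $d$-manifold without boundary. I would then invoke the classical fact that any $d$-dimensional manifold $M$ has $H_i(M)=0$ for $i>d$. The slickest route is via Whitney's smooth triangulation theorem, which realises $M$ as the geometric realisation of a simplicial complex of dimension at most $d$; the simplicial (hence singular) chain complex is then concentrated in degrees $0,\dots,d$ and the homology vanishes above degree $d$.

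The main obstacle is not mathematical depth but rather how much outside machinery to cite. A fully self-contained replacement is available thanks to the specific structure at hand: since $\omega$ is finite and $\epsilon<a/4$, each ball $B_{d_\infty}(v,\epsilon)$ and each non-empty intersection of such balls lifts to a convex cube in $\mathbb{R}^d$, exactly as in the argument used to prove Theorem~\ref{th: cech}. One can therefore refine the resulting arrangement of cubes into a finite CW-decomposition of $\mathcal{U}_\epsilon(\omega)$ all of whose cells have dimension at most $d$; the associated cellular chain complex is then manifestly zero in degrees strictly greater than $d$, yielding $\beta_i(\mathcal{U}_\epsilon(\omega))=0$ for $i>d$ without appeal to Whitney's theorem.
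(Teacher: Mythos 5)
Your argument is correct and follows essentially the same route as the paper: transfer the problem to $\mathcal{U}_\epsilon(\omega)$ via Theorem~\ref{th: cech}, then use the vanishing of the homology of an open $d$-manifold in degrees above $d$ (which the paper simply cites from \cite[Theorem 22.24]{greenbergharper} rather than reproving via triangulation or a cube-based CW-decomposition). The extra detail you supply for that classical fact is sound but not needed beyond the citation.
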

\begin{proof}
  By Theorem~\ref{th: cech}, $\mathcal{C}_{\epsilon}(\omega)$ has the
  same homology as ${\mathcal U}_\epsilon(\omega)$.  But ${\mathcal
    U}_\epsilon(\omega)$ is an open manifold of dimension $d$, so its
  Betti numbers $\beta_i(\omega)$ vanish for $i>d$, see for example
  \cite[Theorem 22.24]{greenbergharper}.
\end{proof}

\begin{proposition}
  \label{prop:chi_interpretation}
  Let $\omega\in \mathbb{T}^d_{a}$ be a set of points, generating the
  simplicial complex $\mathcal{C}_{\epsilon}(\omega)$. There are only
  two possible values for the $d$-th Betti number of
  $\mathcal{C}_{\epsilon}(\omega)$:
  \begin{enumerate}[i)]
  \item $\beta_d(\omega)=0$, or
  \item $\beta_d(\omega)=1$.
  \end{enumerate}
  If the latter condition holds, then we also have
  $\chi(\omega)=0$.
\end{proposition}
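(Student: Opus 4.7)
My plan is to transfer both invariants from the simplicial object $\mathcal{C}_\epsilon(\omega)$ to the underlying union of balls $\mathcal{U}_\epsilon(\omega)$ via Theorem~\ref{th: cech}: as Betti numbers and the Euler characteristic are homotopy invariants, this gives $\beta_d(\mathcal{C}_\epsilon(\omega)) = \beta_d(\mathcal{U}_\epsilon(\omega))$ and $\chi(\mathcal{C}_\epsilon(\omega)) = \chi(\mathcal{U}_\epsilon(\omega))$, so the question becomes one about an open subset of the torus.

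I would then split into two mutually exclusive cases. First, if $\mathcal{U}_\epsilon(\omega)$ is a proper open subset of $\mathbb{T}_a^d$, each connected component is open in $\mathbb{T}_a^d$, and by connectedness of $\mathbb{T}_a^d$ no such component can be compact (a non-empty clopen subset of a connected Hausdorff space is the whole space). Each component is therefore a non-compact connected $d$-manifold without boundary, for which the top homology vanishes, by the same result of Greenberg--Harper already invoked in the proof of Proposition~\ref{prop: beta_d}. Summing over components gives $\beta_d(\mathcal{U}_\epsilon(\omega)) = 0$, which is case~(i).

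Second, if the balls cover the torus, $\mathcal{U}_\epsilon(\omega) = \mathbb{T}_a^d$, and our complex is homotopy equivalent to $\mathbb{T}_a^d$ itself. Being a closed connected orientable $d$-manifold, the torus satisfies $\beta_d = 1$, which is case~(ii). For the supplementary assertion about $\chi$, I will use the product decomposition $\mathbb{T}_a^d \simeq (S^1)^d$ together with the multiplicativity of the Euler characteristic under products: $\chi(\mathbb{T}_a^d) = \chi(S^1)^d = 0$, which yields $\chi(\mathcal{C}_\epsilon(\omega)) = 0$.

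I do not anticipate any real obstacle, since the ingredients are standard facts from manifold topology and everything else piggybacks on Theorem~\ref{th: cech}. The only step that requires a bit of care is the dichotomy itself, namely ruling out compact components in the non-covering case; this rests on the elementary topological remark about clopen subsets of $\mathbb{T}_a^d$ just mentioned.
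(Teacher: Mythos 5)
Your proposal is correct and follows essentially the same route as the paper: transfer to $\mathcal{U}_\epsilon(\omega)$ via Theorem~\ref{th: cech}, then dichotomize according to whether the balls cover the torus, using the Greenberg--Harper vanishing of top homology for the non-compact case and the standard facts $\beta_d(\mathbb{T}^d_a)=1$, $\chi(\mathbb{T}^d_a)=0$ for the covering case. The only difference is that you spell out the componentwise non-compactness argument (via clopen subsets) that the paper leaves implicit in the phrase ``strict open sub-manifold, hence non-compact,'' which is a welcome but minor refinement.
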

\begin{proof}
  By Theorem~\ref{th: cech}, $\mathcal{C}_{\epsilon}(\omega)$ has the
  same homology as ${\mathcal U}_\epsilon(\omega)$.  Now, ${\mathcal
    U}_\epsilon(\omega)$ is an open sub-manifold of the torus, so
  there are only two possibilities:
  \begin{enumerate}[i)]
  \item ${\mathcal U}_\epsilon(\omega)$ is a strict open sub-manifold,
    hence non-compact
  \item ${\mathcal U}_\epsilon(\omega)=\mathbb{T}^d_{a}$.
  \end{enumerate}
  In the first case, $\beta_d(\omega)=0$ by \cite[Corollary
  22.25]{greenbergharper}.  In the second case
  $\mathcal{C}_{\epsilon}(\omega)$ has same homology as the torus,
  hence $\beta_d(\omega)=1$ and $\chi(\omega)=0$.
\end{proof}

\section{Poisson point process and Malliavin calculus}
The space of configurations on $X=[0,\, a)^d$, is the set of locally
finite simple point measures (see \cite{verejones,MR2531026} for
details):
\begin{equation*}
  \label{eq: def_poisson}
  \Omega^{X}=\left\{\omega=\sum_{k=1}^n\delta(x_k)\ :\ (x_k)_{k=1}^{k=n} \subset X,\ n\in\mathbb{N}\cup\{\infty\}\right\},
\end{equation*}
where $\delta(x)$ denotes the Dirac measure at $x\in X$.  It is
often convenient to identify an element $\omega$ of $\Omega^X$ with
the set corresponding to its support, i.e. $\sum_{k=1}^n\delta(x_k)$
is identified with the unordered set $\{x_1,\, \dots,\, x_n\}$.  For
$A\in \mathcal{B}(X)$, we have $\delta(x)(A)=\indicator{A}(x)$, so that
\begin{eqnarray*}
  \omega(A)= \sum_{x\in\omega}\indicator{A}(x),
\end{eqnarray*}
counts the number of atoms in $A$. Simple
measure means that $\omega(\{x\})\le 1$ for any $x\in X$. Locally
finite means that $\omega(K)<\infty$ for any compact $K$ of $X$. The configuration space
$\Omega^{X}$ is endowed with the vague topology and its associated
$\sigma$-algebra denoted by $\mathcal{F}^{X}$.
To characterize the randomness of the system, we consider that the set
of points is represented by a Poisson point process $\omega$ with
intensity measure $\d\Lambda(x)=\lambda \d x$ in $X$. The
parameter $\lambda$ is called the intensity of the Poisson
process.  Since $\omega$ is a
Poisson point process of intensity measure $\Lambda$:
\begin{enumerate}[i)]
\item For any compact $A$, $\omega(A)$ is a random variable of
  parameter $\Lambda(A)$:
  \begin{equation*}
    \Pr(\omega(A)=k)=e^{-\Lambda(A)}\frac{\Lambda(A)^{k}}{k!}\cdotp
  \end{equation*}
\item For any disjoint sets  $A, A'\in \mathcal{B}(X)$, the random
  variables $\omega(A)$ and $\omega(A')$ are independent.
\end{enumerate}

Along this paper, we refer $\E{F}$ as the mean of some function $F$
depending on $\omega$ given that the intensity measure of this process is
$\Lambda$. The notations $\var{F}$ and $\cov{F,\,G}$ are defined accordingly. As said above, a configuration $\omega$ can be viewed
as a measure on $X$. It also induces a measure on any $X^n$, called
the factorial measure associated to $\omega$ of order $n$, defined by
\begin{equation*}
  \omega^{(n)}(C)=\sum_{\substack{(x_1,\cdots,\, x_n)\in \omega\\
      x_i\not = x_j}}\indicator{C}(x_1,\,\cdots,\, x_n),
\end{equation*}
for any $C\in X^n$, with the convention that $\omega^{(n)}$ is the
null measure if $\omega$ has less than $n$ points.  Let $f\in
L^1(\Lambda^{\otimes n})$ and let $F$ be
a random variable given by
\begin{equation*}
  F(\omega)=\sum_{\substack{x_i\in
      \omega\\x_i\not=x_j}}f(x_1,\,\dots,\,x_n)=\int f(x_1,\dots,x_n)\d\omega^{(n)}(x_1,\,\cdots,\,
  x_n).
\end{equation*}
The Campbell-Mecke formula for Poisson point processes states that
\begin{equation*}
  \E{F}=\int_{X^n} f(x_1,\,\dots,\, x_n)\d\Lambda(x_1)\dots\d\Lambda(x_n).
\end{equation*}
In view of this result, it is natural to introduce the compensated
factorial measures defined by :
\begin{equation*}
  d\omega^{(1)}_\Lambda(x)=d\omega(x)-\d\Lambda(x)
\end{equation*}
and for $n\ge 2$, for any $f\in L^1(\Lambda^{\otimes n})$,
\begin{multline*}
  \int f(x_1,\, \cdots,\, x_n) d\omega_\Lambda^{(n)}(x_1,\,
  \cdots,\, x_n)\\= \int \left(\int f(x_1,\, \cdots,\,
    x_n) (d(\omega-\sum_{j=1}^{n-1}\delta(x_j))(x_n)-\d\Lambda(x_n))
  \right)\\ d\omega_\Lambda^{(n-1)}(x_1,\, \cdots,\, x_{n-1}).
\end{multline*}
A real-valued function $f\ :\ X^n \rightarrow \mathbb{R}$ is called symmetric
if
\begin{eqnarray*}
  f(x_{\sigma(1)},\, \dots,\, x_{\sigma(n)})=f(x_1,\, \dots,\, x_n)
\end{eqnarray*}
for all permutations $\sigma$ of $\SS_n$.
Then the space of square integrable symmetric functions of $n$
variables is denoted by $L^2(X,\Lambda)^{\circ n}$. For $f\in
L^2(X,\Lambda)^{\circ n}$, the multiple Poisson stochastic
integral $I_n(f_n)$ is then defined as
\begin{equation*}
  I_n(f_n)(\omega)=\int f_n(x_1,\dots,x_n) \d\omega_\Lambda^{(n)}(x_1,\,
  \cdots,\, x_n).
\end{equation*}
It is known that $I_n(f_n) \in L^2(\Omega^X, \P)$.  Moreover, if
$f_n\in L^2(X,\,\Lambda)^{\circ n}$ and $g_m\in L^2(X,\,\Lambda)^{\circ m}$, the isometry formula
\begin{equation}
  \label{eq: isometry}
\E{I_n(f_n)I_m(g_m)}
  =n!\,\indicator{m}(n) \ 
  \langle  f_n,g_m\rangle_{L^2(X,\Lambda)^{\circ n}} 
\end{equation}
holds true.  Furthermore, we have:
\begin{theorem}
  Every random variable $F\in L^2(\Omega^{X},\, \P)$ admits a unique
  Wiener-Poisson decomposition of the type
  \begin{equation*}
    F=\esp{F}+\sum_{n=1}^{\infty}I_n(f_n),
  \end{equation*}
  where the series converges in $L^2(\Omega^X,\P)$ and, for each
  $n\geq1$, the kernel $f_n$ is an element of $L^2(X,\Lambda)^{\circ n}$. Moreover, by definition $\var{F}=\Vert
  F-\esp{F}\Vert_{L^2(\Omega^X,\P)}^2$ then we have the isometry
  \begin{equation}
    \label{eq: isometry2}
    \var{F}=\sum_{n=1}^{\infty}n!\ \Vert f_n\Vert_{L^2(X,\Lambda) ^{\circ n}}^2.
  \end{equation}
\end{theorem}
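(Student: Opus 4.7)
The plan is to establish the orthogonality of the multiple-integral chaos spaces, prove their combined span is dense in $L^2(\Omega^X,\P)$, and read off both the uniqueness of the kernels and the isometry (\ref{eq: isometry2}) as consequences.

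First, I would set $\mathcal H_0=\R$ and $\mathcal H_n=\{I_n(f):f\in L^2(\lambda)^{\circ n}\}$ for $n\ge 1$. The isometry formula (\ref{eq: isometry}), together with the fact that $\esp{I_n(f_n)}=0$ for $n\ge 1$, shows that the $\mathcal H_n$ are pairwise orthogonal closed subspaces of $L^2(\Omega^X,\P)$. Hence if one knows that $L^2(\Omega^X,\P)=\bigoplus_{n\ge 0}\mathcal H_n$, then every $F$ can be written as $F=\esp{F}+\sum_{n\ge 1} I_n(f_n)$, the kernels $f_n$ are determined by orthogonal projection onto $\mathcal H_n$ (hence unique as symmetric elements of $L^2(\lambda)^{\circ n}$), and applying (\ref{eq: isometry}) term by term yields (\ref{eq: isometry2}).

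The real work is therefore the completeness $L^2(\Omega^X,\P)=\overline{\bigoplus_{n\ge 0}\mathcal H_n}$. My preferred route is through Poisson exponential vectors: for $h\in L^2(\lambda)\cap L^\infty$ compactly supported with $1+h>0$, set
\begin{equation*}
\mathcal E(h)(\omega)=\exp\!\Bigl(-\int_X h(x)\,\d\lambda(x)\Bigr)\prod_{x\in\omega}(1+h(x)).
\end{equation*}
A direct computation using the Campbell/Mecke formula, or explicit expansion of the product, gives the chaos expansion
\begin{equation*}
\mathcal E(h)=1+\sum_{n=1}^{\infty}\frac{1}{n!}I_n(h^{\otimes n}),
\end{equation*}
with $L^2$ convergence ensured by $\sum_n \frac{1}{n!}\|h\|_{L^2(\lambda)}^{2n}=\exp(\|h\|^2_{L^2(\lambda)})<\infty$. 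In particular each $\mathcal E(h)$ lies in $\overline{\bigoplus \mathcal H_n}$.

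It then suffices to show the linear span of the $\mathcal E(h)$ is dense in $L^2(\Omega^X,\P)$. For this I would take $F\in L^2(\Omega^X,\P)$ orthogonal to every $\mathcal E(h)$ and apply a monotone-class argument: specializing $h=\sum_{i=1}^{k}(e^{z_i}-1)\indicator{A_i}$ for disjoint compacts $A_1,\dots,A_k$ shows that $\esp{F\prod_i e^{z_i\omega(A_i)}}=0$ for all real $z_i$ in a neighborhood of $0$; injectivity of the multidimensional Laplace transform then gives $\esp{F\mid \omega(A_1),\dots,\omega(A_k)}=0$ for every finite family of disjoint compacts, and since cylindrical $\sigma$-algebras generate $\mathcal F^X$, $F=0$ almost surely. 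This is the only delicate step — once the totality of exponential vectors is in hand, the theorem and the isometry (\ref{eq: isometry2}) follow immediately from the orthogonal decomposition argument of the preceding paragraph.
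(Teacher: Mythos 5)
Your argument is correct, but note that the paper does not prove this theorem at all: it is quoted as a known result (from the references on Poisson functionals and Malliavin calculus, e.g.\ Ito and Privault), so there is no internal proof to compare against. What you supply is the standard complete proof: orthogonality of the chaos subspaces $\mathcal H_n$ from the isometry \eqref{eq: isometry}, the Charlier-type expansion $\mathcal E(h)=\sum_{n\ge 0}\frac{1}{n!}I_n(h^{\otimes n})$ of the Poisson exponential vectors, and totality of these vectors via the Laplace transform of the finite-dimensional distributions together with the fact that the maps $\omega\mapsto\omega(A)$ generate $\mathcal F^{X}$. All the steps check out: the image of $I_n$ is closed because $I_n$ is a multiple of an isometry, the $L^2$ convergence of the exponential series follows from $\sum_n\|h\|^{2n}/n!<\infty$, and the choice $h=\sum_i(e^{z_i}-1)\indicator{A_i}$ does reduce orthogonality to the injectivity of a multivariate Laplace (equivalently, power-series) transform on $\N^k$. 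The only cosmetic remark is that the left-hand side of \eqref{eq: isometry2} should read $\Vert F-\esp{F}\Vert_{L^2(\P)}^2$; that is a typo in the paper's statement, not a defect of your proof.
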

For $f_n\in L^2(X,\Lambda)^{\circ n}$ and $g_m\in L^2(X,\Lambda)^{\circ m}$, we define $f_n\otimes_{k}^l g_m$, $0\leq l\leq k$,
to be the function:
\begin{multline}
  \label{eq:contraction}
  (y_{l+1},\dots,y_n,x_{k+1},\dots,x_m)\longmapsto \\
  \int_{X^l} f_n(y_1,\dots,y_n)g_m(y_1,\dots,y_k,x_{k+1},\dots,x_m)\d
  \Lambda(y_1)\ldots\d \Lambda(y_l).
\end{multline}
We denote by $f_n\circ_{k}^l g_m$ the symmetrization in $n+m-k-l$
variables of $f_n\otimes_{k}^l g_m$, $0\leq l\leq k$. This leads us to
the next proposition (see \cite{MR2531026} for a proof):
\begin{proposition}
  \label{prop:chaos_prod}
  For $f_n\in L^2(X,\Lambda)^{\circ n}$ and $g_m\in L^2(X,\Lambda)^{\circ m}$, we have
  \begin{equation*}
    I_n(f_n)I_m(g_m)=\sum_{s=0}^{2(n\wedge m)}I_{n+m-s}(h_{n,m,s}),
  \end{equation*}
  where
  \begin{equation*}
    h_{n,m,s}=\sum_{s\leq 2i\leq 2(s\wedge n\wedge m)}i!{n\choose i}{m\choose i}{i\choose s-i}f_n\circ_{i}^{s-i} g_m
  \end{equation*}
  belongs to $L^2(X,\Lambda)^{\circ n+m-s}$, $0\leq s\leq
  2(m\wedge n)$.
\end{proposition}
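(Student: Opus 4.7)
The plan is to reduce the identity by density to an elementary class of kernels, perform the product expansion explicitly on that class, and match coefficients combinatorially.

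By the isometry~(\ref{eq: isometry}), it is enough to verify the identity when $f_n$ and $g_m$ are finite linear combinations of symmetrizations of indicators $\indicator{A_1 \times \cdots \times A_n}$ with pairwise disjoint $A_i$: such tensor kernels are dense in $L^2(\lambda)^{\circ n}$, and the contractions~(\ref{eq:contraction}) are $L^2$-continuous since $\lambda$ is a finite intensity on the torus. Replacing all sets by the common partition they generate and expanding by multilinearity, I may further assume that every pair $(A_i, B_j)$ is either equal or disjoint. In that case $I_n(f_n)$ and $I_m(g_m)$ factor as products of compensated Poisson masses $M(A_i) = \omega(A_i) - \lambda(A_i)$ and $M(B_j)$ respectively.

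For each shared set $C = A_i = B_j$ of mass $\mu = \lambda(C)$, the Poisson moment formula yields the elementary identity
\begin{equation*}
M(C)^2 = I_2\bigl(\indicator{C}^{\otimes 2}\bigr) + M(C) + \mu,
\end{equation*}
which is also the proposition itself in the case $n = m = 1$, with the three terms corresponding to $s \in \{0,1,2\}$. Iterating this decomposition over every shared factor, and collecting according to how many of these factors have their two arguments identified (call that number $i$) and how many of the identifications are then integrated against $\lambda$ (call that number $s-i$), one rewrites $I_n(f_n) I_m(g_m)$ as a sum of multiple integrals of order $n+m-s$ whose kernels, after symmetrization in the remaining $n+m-s$ variables, are exactly $f_n \circ_i^{s-i} g_m$.

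The combinatorial coefficient attached to each $(n,m,s,i)$ is $i!\binom{n}{i}\binom{m}{i}\binom{i}{s-i}$: the factor $i!\binom{n}{i}\binom{m}{i}$ counts the ordered pairings of $i$ arguments of $f_n$ with $i$ arguments of $g_m$, and $\binom{i}{s-i}$ counts the choice of which $s-i$ pairs are integrated against the intensity rather than left diagonal. The index range $s \le 2i \le 2(s \wedge n \wedge m)$ simply records that $\lceil s/2 \rceil \le i \le s \wedge n \wedge m$. The main obstacle is this combinatorial bookkeeping: in passing from the ordered, tensor-level expansion to the symmetrized contractions $\circ_i^{s-i}$, one has to verify that after quotienting by the natural action of $\SS_{n+m-s}$ the multiplicities collapse to exactly the above factor, with no residual correction. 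Once this identity is established on simple tensors, density closes the proof.
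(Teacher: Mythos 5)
The paper itself offers no proof of this proposition: it is quoted as the classical multiplication formula for Poisson multiple integrals (cf.~\cite{MR2531026}), so there is no in-paper argument to compare yours against. Your outline --- reduce to symmetrized indicators of pairwise disjoint sets, factor $I_n(f_n)$ and $I_m(g_m)$ into products of compensated masses, iterate the rank-one identity $M(C)^2=I_2(\indicator{C}^{\otimes 2})+M(C)+\lambda(C)$ over the shared sets, and read off the coefficient $i!\binom{n}{i}\binom{m}{i}\binom{i}{s-i}$ from the choice of which pairs are identified and which of those are integrated --- is the standard route, and the ingredients you name are correct, including the base identity and the index range $\lceil s/2\rceil\le i\le s\wedge n\wedge m$.

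Two steps do not close as written. First, the contractions \eqref{eq:contraction} are \emph{not} $L^2$-continuous, even on a finite measure space: for $l<k$ the kernel $f_n\otimes_k^l g_m$ restricts the product $f_n g_m$ to a partial diagonal (e.g.\ $f\otimes_1^0 g(y_1,\dots)=f(y_1,\dots)g(y_1,\dots)$), and its $L^2$ norm is not controlled by $\Vert f_n\Vert_{L^2}\Vert g_m\Vert_{L^2}$; one needs $L^4$-type integrability or bounded kernels. So the density argument does not extend the identity from simple tensors to all of $L^2(\lambda)^{\circ n}$; indeed the formula only makes sense when the contractions are themselves square-integrable (automatic for the bounded, compactly supported kernels the paper actually feeds into it, so this is harmless downstream, but it is a real gap in a proof of the statement as written). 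Second, you explicitly defer the verification that, after symmetrizing in the surviving $n+m-s$ variables, the multiplicities collapse to exactly $i!\binom{n}{i}\binom{m}{i}\binom{i}{s-i}$ with no residual factor; since you yourself call this ``the main obstacle,'' what you have is a correct plan rather than a complete proof. A sanity check such as $n=m=2$ with $f=g$ the symmetrization of $\indicator{A}\otimes\indicator{B}$, $A\cap B=\emptyset$, does confirm the coefficient (the $s=2$, $i=2$ term $2\,f\circ_2^0 f=f$ reproduces $M(A)M(B)$), but the general count still has to be written out, e.g.\ by induction on the number of shared sets.
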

In what follows, given $f\in L^2(X,\Lambda)^{\circ q}$ $(q\geq2)$
and $t\in X$, we denote by $f(*,x)$ the function on $X^{q-1}$ given by
$(x_1,\dots,x_{q-1})\longmapsto f(x_1,\dots,x_{q-1},x)$.
\begin{definition}
  \label{eq: def malliavin}
  Let $\dom D$ be the the set of random variables $F\in
  L^2(\Omega^X,\, \P)$ admitting a chaotic decomposition such that
  \begin{equation*}
    \sum_{n=1}^{\infty}qq!\Vert f_n\Vert_{L^2(X,\Lambda)^{\circ n}}^2<\infty.
  \end{equation*}
  Let $D$ be defined by
  \begin{align*}
    D\ :\ \dom D&\rightarrow L^2(\Omega^{X}\times X,\P\otimes \Lambda)\\
    F=\esp{F}+\sum_{n\ge 1} I_n(f_n)& \longmapsto D_xF=\sum_{n\ge 1}
    nI_{n-1}(f_n(*,x)).
  \end{align*}
  It is known, cf.~\cite{ito}, that we also have
  \begin{equation*}
    \label{eq:ito}
    D_xF(\omega)=F(\omega\cup\{x\})-F(\omega),\ \P\otimes \Lambda-a.e.
  \end{equation*}
\end{definition}
\begin{definition}
  The Ornstein-Uhlenbeck operator $L$ is given by
  \begin{equation*}
    LF=-\sum_{n=1}^{\infty}nI_{n}(f_n),
  \end{equation*}
  whenever $F\in \dom L$, given by those $F\in L^2(\Omega^X,\, \P)$ such
  that their chaos expansion verifies
  \begin{equation*}
    \sum_{n=1}^{\infty}q^2q!\Vert f_n\Vert_{L^2(X,\Lambda)^{\circ n}}^2<\infty.
  \end{equation*}
  Note that $\E{LF}=0$, by definition and \eqref{eq: isometry}.
\end{definition}
\begin{definition}
  For $F\in L^2(\Omega^X,\P)$ such that $\esp{F}=0$, we may define
  $L^{-1}$ by
  \begin{equation*}
    L^{-1}F=-\sum_{n=1}^{\infty}\frac{1}{n}I_{n}(f_n).
  \end{equation*}
\end{definition}
Combining Stein's method and Malliavin calculus yields the following
theorem, see~\cite{taqqu}:
\begin{theorem}
  \label{thm_chi_mean_v2:1}
  Let $F\in\dom D$ be such that $\E{F}=0$ and Var$(F)=1$. Then,
  \begin{multline*}
    \label{eq: taqqu}
    d_W(F,\, \mathcal{N}(0,1))\leq \E{\left| 1+\int_{X}[ D_xF\times
        D_xL^{-1}F]\d\Lambda(x)\right|}\\
    +\int_{X}\E{\left| D_xF\right|^2 \left|
        D_xL^{-1}F\right|}\d\Lambda(x).
  \end{multline*}
\end{theorem}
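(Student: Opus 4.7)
The plan is to combine Stein's method with the Malliavin integration-by-parts formula on Poisson space, following the now-standard Peccati--Solé--Taqqu--Utzet approach. First, I would invoke the dual characterization
\begin{equation*}
d_W(F,\mathcal{N}(0,1))=\sup_{h}\bigl|\E{h(F)}-\E{h(\mathcal{N}(0,1))}\bigr|,
\end{equation*}
the supremum running over $1$-Lipschitz $h$. For each such $h$, Stein's equation $f'(x)-xf(x)=h(x)-\E{h(\mathcal{N}(0,1))}$ admits a solution $f=f_h$ satisfying the classical bounds $\|f'\|_\infty\le1$ and $\|f''\|_\infty\le2$. Thus it is enough to bound $|\E{f'(F)-Ff(F)}|$ uniformly in $f$.

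Next I would rewrite the product $Ff(F)$ via Malliavin duality. Since $\E{F}=0$, the chaos expansion of $F$ has no $0$-th term, hence $F=-LL^{-1}F=\delta(DL^{-1}F)$, where $\delta$ is the adjoint of $D$ with respect to $\P\otimes\lambda$. The integration-by-parts formula $\E{G\,\delta(u)}=\E{\langle DG,u\rangle_{L^2(\lambda)}}$ applied with $G=f(F)$ and $u=DL^{-1}F$ gives
\begin{equation*}
\E{Ff(F)}=\E{\int_X D_tf(F)\,D_tL^{-1}F\,\d\lambda(t)}.
\end{equation*}

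The crucial Poisson-specific step is to expand $D_tf(F)$. Because $D_tF(\omega)=F(\omega\cup\{t\})-F(\omega)$ is a finite difference, not a true derivative, a second-order Taylor expansion of the scalar function $f$ between the values $F(\omega)$ and $F(\omega)+D_tF(\omega)$ yields
\begin{equation*}
D_tf(F)=f'(F)\,D_tF+R_t,\qquad |R_t|\le\tfrac{1}{2}\|f''\|_\infty\,(D_tF)^2\le(D_tF)^2.
\end{equation*}
Plugging this back and subtracting from $\E{f'(F)}$ gives
\begin{equation*}
\E{f'(F)-Ff(F)}=\E{f'(F)\Bigl(1-\int_X D_tF\cdot D_tL^{-1}F\,\d\lambda(t)\Bigr)}-\E{\int_X R_t\,D_tL^{-1}F\,\d\lambda(t)}.
\end{equation*}
Taking absolute values and applying $\|f'\|_\infty\le1$ to the first term and the bound on $R_t$ to the second yields the two summands on the right-hand side of the claimed inequality.

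The main obstacle is precisely the appearance of the second, quadratic remainder term, which has no analogue in the Gaussian--Malliavin setting where $D$ obeys a genuine chain rule: here one must control the finite-difference remainder uniformly, which is why the Poisson bound involves the extra $|D_tF|^2$ factor. Secondary technical issues are justifying the use of Fubini to interchange $\E{\cdot}$ with $\int_X\cdot\,\d\lambda(t)$, and verifying that $f(F)$ lies in $\dom D$ so that the integration-by-parts identity applies; both follow from the hypothesis $F\in\dom D$ combined with $\|f'\|_\infty,\|f''\|_\infty<\infty$.
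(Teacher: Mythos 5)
The paper does not prove this theorem at all: it is quoted directly from Peccati--Sol\'e--Taqqu--Utzet \cite{taqqu}, so there is no internal proof to compare against. Your argument is a correct reconstruction of the proof in that reference --- Stein's equation with the bounds $\|f'\|_\infty\le 1$, $\|f''\|_\infty\le 2$, the representation of $F$ as a divergence, integration by parts, and the finite-difference Taylor expansion producing the quadratic remainder $|R_t|\le (D_tF)^2$ are exactly the right ingredients, and the two terms of the bound fall out as you describe. One small sign point: with the paper's definition $L^{-1}F=-\sum_n \frac{1}{n}I_n(f_n)$ one has $F=LL^{-1}F$ (not $-LL^{-1}F$) and hence $\E{Ff(F)}=\E{\langle Df(F),\,-DL^{-1}F\rangle_{L^2(\lambda)}}$; the statement as printed, with $+D_tL^{-1}F$, is only consistent because the paper later silently uses $D_tL^{-1}F=+\sum_i I_{i-1}(f_i(\ast,t))$, dropping the minus from its own definition. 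Your derivation inherits the same harmless sign wobble but arrives at the stated inequality, so there is no genuine gap.
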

Another result from the Malliavin calculus used in this work is the
following one, quoted from \cite{MR2531026}:
\begin{theorem}
  \label{th:conc_inequal}
  Let $F\in \dom D$ be such that $DF\leq K$, a.s., for some $K\geq0$
  and we denote
$$\Vert DF\Vert_{L^{\infty}(L^{2}(X,\Lambda),\P)} : =\sup_\omega
\int_X |D_x F(\omega)|^2 \d \Lambda(x)< \infty.$$ Then
\begin{equation}
  \label{eq: conc_inequal}
  \Pr(F-\E{F}\geq x)\leq \exp\left(-\frac{x}{2K}\log\left(1+\frac{xK}{\Vert DF\Vert_{L^{\infty}(L^{2}(X,\,\Lambda),\P)}  } \right) \right).
\end{equation}
\end{theorem}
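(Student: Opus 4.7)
The plan is the classical Herbst exponential method adapted to Poisson functionals, in which the diffusion log-Sobolev inequality is replaced by a modified version for the difference operator $D$. Set $\sigma^{2}=\Vert DF\Vert_{L^{\infty}(\Omega,L^{2}(X))}^{2}$ and, for $\theta\ge 0$, $M(\theta)=\E{e^{\theta(F-\E{F})}}$. Markov's inequality gives $\Pr(F-\E{F}\geq x)\leq e^{-\theta x}M(\theta)$, so the whole task reduces to producing a Cram\'er-type bound on $\log M(\theta)$ and then choosing $\theta$ appropriately.

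The central step is to establish the modified logarithmic Sobolev inequality
\begin{equation*}
\mathrm{Ent}\bigl(e^{\theta F}\bigr)\leq \E{\int_X e^{\theta F}\,\psi(\theta D_tF)\,\d\lambda(t)},\qquad \psi(u):=ue^{u}-e^{u}+1.
\end{equation*}
This is obtained by plugging $f=e^{\theta F}$ into the Poisson $L\log L$ inequality of Wu/An\'e--Ledoux, namely $\mathrm{Ent}(f)\leq \E{\int_X(f^{(t)}\log f^{(t)}-f\log f-(f^{(t)}-f)(1+\log f))\,\d\lambda(t)}$ with $f^{(t)}:=f(\omega\cup\{t\})$, and simplifying: an elementary expansion collapses the integrand to $e^{\theta F}\psi(\theta D_tF)$. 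The $L\log L$ inequality itself rests on entropy dissipation along the Ornstein--Uhlenbeck semigroup generated by the operator $L$ introduced above. Using the bound $\psi(u)=\int_0^{u}se^s\,ds\leq \tfrac12 u^{2}e^{K}$ valid whenever $u\leq K$ (the case $u\leq 0$ using $e^{u}\leq 1\leq e^{K}$), the assumptions $D_tF\leq K$ and $\int_X(D_tF)^{2}\,\d\lambda\leq \sigma^{2}$ reduce the log-Sobolev inequality to $\mathrm{Ent}(e^{\theta F})\leq \tfrac12\theta^{2}e^{\theta K}\sigma^{2}\E{e^{\theta F}}$.

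The identity $\mathrm{Ent}(e^{\theta F})=\theta^{2}\E{e^{\theta F}}\frac{d}{d\theta}\bigl(\theta^{-1}\log\E{e^{\theta F}}\bigr)$ then converts this into the scalar differential inequality $\frac{d}{d\theta}(\theta^{-1}\log M(\theta))\leq \tfrac12\sigma^{2}e^{\theta K}$, which integrates from $0$ (using $\lim_{\theta\to 0}\theta^{-1}\log M(\theta)=0$, since $F-\E{F}$ is centered) to
\begin{equation*}
\log M(\theta)\leq \frac{\sigma^{2}\theta}{2K}\bigl(e^{\theta K}-1\bigr).
\end{equation*}
Substituting into Markov and making the specific choice $\theta^{*}=K^{-1}\log(1+xK/\sigma^{2})$ makes $e^{\theta^{*}K}-1=xK/\sigma^{2}$, so the correction term equals $\theta^{*}x/2$ and the exponent collapses to $-\theta^{*}x/2=-\tfrac{x}{2K}\log(1+xK/\sigma^{2})$, which is exactly the stated bound \eqref{eq: conc_inequal}.

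The hard part is the modified log-Sobolev inequality: unlike the diffusive chain rule $|\nabla e^F|^{2}=e^{2F}|\nabla F|^{2}$, which closes the Herbst loop in a purely quadratic way and yields a Gaussian tail, the add-one gradient $D_te^{\theta F}=e^{\theta F}(e^{\theta D_tF}-1)$ generates an intrinsically exponential factor on the right-hand side, encapsulated in $\psi$; it is precisely this factor that degrades the Gaussian exponent into the Bennett/Bernstein form of \eqref{eq: conc_inequal}. Once the log-Sobolev step is in place, the integration of the scalar inequality and the explicit optimizer $\theta^{*}$ are mechanical.
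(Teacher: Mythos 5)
The paper does not actually prove this statement: it is imported verbatim from Privault's lecture notes \cite{MR2531026}, so there is no internal proof to compare against. Your argument is correct and is one of the two standard derivations of the bound. The collapse of the An\'e--Ledoux $L\log L$ functional at $f=e^{\theta F}$ to $e^{\theta F}\psi(\theta D_tF)$ with $\psi(u)=ue^{u}-e^{u}+1$ checks out, the estimate $\psi(u)\le\tfrac12 u^{2}e^{K}$ for $u\le K$ (hence $\psi(\theta D_tF)\le\tfrac12\theta^{2}(D_tF)^{2}e^{\theta K}$ for $\theta\ge0$) is right, and the Herbst integration together with $\theta^{*}=K^{-1}\log(1+xK/\sigma^{2})$ returns exactly \eqref{eq: conc_inequal}. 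The route in \cite{MR2531026} differs only in its first step: instead of a modified log-Sobolev inequality it bounds $\E{(F-\E{F})e^{\theta(F-\E{F})}}$ directly through a covariance identity (Clark-formula or semigroup based), exploiting $D_te^{\theta F}=e^{\theta F}(e^{\theta D_tF}-1)\le e^{\theta F}\,K^{-1}(e^{\theta K}-1)\,D_tF$; both routes end with the same Chernoff optimization and the same Bennett-type exponent, so nothing is lost either way. Two caveats. First, you read the denominator in \eqref{eq: conc_inequal} as the \emph{squared} norm $\sigma^{2}=\Vert DF\Vert^{2}_{L^{\infty}(\Omega,L^{2}(X))}$; this is what the derivation produces and what the paper itself substitutes when applying the theorem to $\beta_0$ (it uses $\lambda\sup_{t}\vert D_t\beta_0\vert^{2}$), but it does not match the unsquared norm as literally printed -- the discrepancy is in the paper's notation, not in your proof. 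Second, the Herbst loop requires $\E{e^{\theta F}}<\infty$ up to $\theta^{*}$; this should be secured by a routine truncation of $F$ (the hypotheses are stable under $F\mapsto F\wedge n$) before passing to the limit.
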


\section{First order moments}

Let $\omega$ denote a generic realization of a Poisson point process
on the torus $\mathbb{T}_a^d$ and $\mathcal C_{\epsilon}(\omega)$ the
associated 
\u{C}ech complex with $\epsilon<a/4$. A Poisson process in
$\R^d$ of intensity $\lambda$ dilated by a factor $\alpha$ is a
Poisson process of intensity $\lambda \alpha^{-d}$. Hence,
statistically, the homological properties of a Poisson process of
intensity $\lambda$, inside a torus of length $a$ with ball sizes
$\epsilon$ are the same as that of a Poisson process of intensity
$\lambda \alpha^{-d}$, inside a torus of length $\alpha a$ with ball
sizes $\alpha \epsilon$.  Thus there are only two degrees of freedom
among $\lambda$, $a$, and $\epsilon$. For instance, we can set $a=1$
and the general results are obtained by a multiplication of magnitude
$a^d$. Strictly speaking, Betti numbers, Euler characteristic and
number of $k$-simplices are functions of $\mathcal
C_{\epsilon}(\omega)$ but we will skip this dependence for the sake of
notations. We also define $N_k$ as the number of $(k-1)$-simplices.

In this section, we evaluate the mean of the number of
$(k-1)$-simplices $\E{N_k}$ and the mean of the Euler characteristic
$\E{\chi}$.  We introduce some notations.  Let
\begin{equation*}
  \Delta_{k}^{(d)}=\{(v_1,\dots,v_k) \in ([0,\, a)^d)^k,\  v_i \neq v_j,
  \forall i\neq j \}.
\end{equation*}
For any integer $k$, we define $\varphi_k^{(d)}$ as:
\begin{align*}
  \varphi_k^{(d)} : ([0,\, a)^d)^k & \longrightarrow \{0,1\}\\
  (v_1,\,\cdots,\, v_k) & \longmapsto 
  \begin{cases}
\prod_{1\le i<j\le k}{\mathbf
    1}_{[0,\, 2\epsilon)}({\rho_d}(v_i,\, v_j)) & \text{ if } (v_1,\dots,v_k) \in  \Delta_{k}^{(d)},\\    
 0 & \text{ otherwise.}
  \end{cases}
\end{align*}
In words, this means that $ \varphi_k^{(d)}(v_1,\,\cdots,\, v_k)=1$ if
$[v_1,\,\cdots,\, v_k]$ is a $(k-1)$-simplex and $0$ otherwise.
\begin{theorem}
  \label{th: simp_mean}
  The mean number of $(k-1)$-simplices $N_k$ is given by
  \begin{eqnarray*}
    \E{N_{k}}=\frac{\lambda a^d(\lambda (2 \epsilon)^d)^{k-1} k^d}{k!}\cdotp
    \label{mean_final_resul}
  \end{eqnarray*}
\end{theorem}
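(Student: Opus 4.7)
The strategy is to write $N_k$ as an iterated Poisson integral, apply the multivariate Campbell formula recalled in Section~3 to turn $\E{N_k}$ into a deterministic integral, and then exploit the product structure of $d_\infty$ to factor this integral into one-dimensional volumes that admit an elementary closed form.

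From the discussion preceding Proposition~\ref{prop: beta_d}, a $k$-tuple of distinct points $\{v_1,\ldots,v_k\}$ forms a $(k-1)$-simplex of $\mathcal{C}_\epsilon(\omega)$ precisely when $\varphi_k(v_1,\ldots,v_k) = \prod_{1 \leq i < j \leq k} \indicator{[0, 2\epsilon)}(\|v_i-v_j\|_\infty) = 1$; this is an \emph{equality} of conditions (not merely a homotopy equivalence), because axis-aligned cubes in $\mathbb{R}^d$ enjoy Helly's property, so pairwise intersection of the balls $B_{d_\infty}(v_i,\epsilon)$ forces a common point. Counting each unordered simplex $k!$ times I would write
\begin{equation*}
  N_k = \frac{1}{k!} \int_{\Delta_k} \varphi_k(x_1,\ldots,x_k)\, d\omega(x_1)\cdots d\omega(x_k),
\end{equation*}
so the Campbell identity of Section~3 (the diagonal having zero $\lambda^{\otimes k}$-measure, extending $\Delta_k$ to $(\mathbb{T}_a^d)^k$ costs nothing) yields
\begin{equation*}
  \E{N_k} = \frac{\lambda^k}{k!} \int_{(\mathbb{T}_a^d)^k} \varphi_k(x_1,\ldots,x_k)\, dx_1\cdots dx_k.
\end{equation*}
Using translation invariance on the torus I fix $x_1 = 0$ and pull out a factor $a^d$; because $\|\cdot\|_\infty$ is a product norm and $\epsilon < a/4$ prevents the $2\epsilon$-neighbourhood of $0$ from wrapping in any coordinate, the integrand factors across the $d$ coordinates. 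The remaining integral becomes $a^d V_k^d$, where $V_k$ is the Lebesgue volume in $\mathbb{R}^{k-1}$ of the set of $(y_2,\ldots,y_k)$ for which the diameter of $\{0, y_2, \ldots, y_k\}$ is less than $2\epsilon$.

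The single substantive step is then $V_k = k(2\epsilon)^{k-1}$, which I would settle by partitioning according to which of the $k$ points $\{0, y_2,\ldots, y_k\}$ attains the minimum (unique almost everywhere). When $0$ is the minimum, each $y_j$ lies in $[0, 2\epsilon)$, contributing $(2\epsilon)^{k-1}$; for each of the other $k-1$ choices $y_{i_0} = m$ of minimum, $m$ ranges over $(-2\epsilon, 0)$ while the remaining $k-2$ coordinates each lie in $(m, m+2\epsilon)$, again contributing $(2\epsilon)\cdot(2\epsilon)^{k-2} = (2\epsilon)^{k-1}$. Summing the $k$ disjoint contributions gives $V_k = k(2\epsilon)^{k-1}$, and assembling everything produces the stated formula. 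I expect no real obstacle beyond this volume computation: the identification of $\varphi_k$ with the geometric simplex condition is already in place, and translation invariance plus Campbell are routine once that identification is clear.
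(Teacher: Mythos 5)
Your proposal is correct and follows essentially the same route as the paper: write $N_k$ as an iterated integral of $\varphi_k$, apply Campbell's formula, factor the integral coordinatewise using the product structure of $d_\infty$, and evaluate the resulting one-dimensional volume as $k(2\epsilon)^{k-1}$ per unit length (the paper does this by ordering the points and multiplying by $k!$, you by partitioning on which point attains the minimum — the same computation). Your explicit appeal to Helly's property for axis-aligned cubes to identify the \u{C}ech condition with the pairwise-distance condition is a welcome clarification of a step the paper attributes somewhat loosely to Lemma~\ref{lem: rips-cech}.
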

\begin{proof}
  The number of $(k-1)$-simplices can be counted by the expression:
  \begin{equation*}
    N_k(\omega)=\frac{1}{k!}\int \varphi_k^{(d)}(v_1,\,\cdots,\, v_k)\d \omega^{(k)}(v_1,\, \cdots,\, v_k).
  \end{equation*}
  According to the Campbell-Mecke formula and since the max-distance can
  be tensorized, we have:
  \begin{align*}
    \E{N_k}&=\frac{\lambda^{k}}{k!}\int_{X^k}
    \varphi_k^{(d)}(v_1,\,\cdots,\, v_k)\d v_1\dots \d v_k\\
    &=\frac{\lambda^{k}}{k!} \left( \int_{[0,a)^k}
      \varphi_k^{(1)}(x_i,\, x_j) \d x_1 \dots \d x_k \right) ^d.
  \end{align*}
  A moment of thought reveals that for any $(x_1,\, \cdots, \, x_k)\in
  \Delta_k^{(1)}$, since $\epsilon<a/4<a/2$, there exists a unique index
  $i$ such that for all $j\in \{1,\, \cdots,\, k\}\backslash \{i\}$,
  one and only one of the two following conditions holds:
  \begin{equation*}
    x_{i}<x_j<x_{i}+2\epsilon \text{ or }  x_{i}<x_j+a<x_{i}+2\epsilon. 
  \end{equation*}
  Let $\zeta(x_1,\,\cdots,\, x_k)$ denote this index $i$. Hence, by
  invariance by translation of the Lebesgue measure,
  \begin{multline*}
    \int_{\zeta^{-1}(1)}
    \varphi_k^{(1)}(x_i,\, x_j) \d x_1 \dots \d x_k\\
    =(k-1)! \int_0^a \d x_1\int_{[x_1,\,
      x_1+2\epsilon)^{k-1}}\prod_{j=2}^{k-1} \indicator{x_j<x_{j+1}}\d
    x_2\ldots \d x_k =a(2\epsilon)^{k-1}.
  \end{multline*}
  The very same identity holds for any integral on the set
  $\zeta^{-1}(i)$ for any $i\in \{1,\, \cdots,\, k\}$ hence
  \begin{equation*}
    \int_{[0,a)^k} 
    \varphi_k^{(1)}(x_i,\, x_j) \d x_1 \dots \d x_k=ka(2\epsilon)^{k-1}.
  \end{equation*}
  The proof is thus complete.
\end{proof}
By depoissonization, we can estimate the mean number of $k$-simplices
for a Binomial process: a process with $n$ points uniformly
distributed over the torus.
\begin{corollary}
  \label{cor:depoiss1}
  The mean number $(k-1)$-simplices $N_k$ given $N_1=n$ is
  \begin{equation*}
    \E{N_k\,|\, N_1=n}=\binom{n}{k} k^d \left(\frac{2\epsilon}{a}\right)^{d(k-1)}.
  \end{equation*}
\end{corollary}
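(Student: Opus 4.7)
The plan is to exploit the classical fact that a Poisson process conditioned on having $n$ points is distributed as a Binomial process, i.e.\ as $n$ i.i.d.\ uniform points on the torus $\mathbb{T}_a^d$. Writing $V_1,\ldots,V_n$ for such an i.i.d.\ uniform sample, I express
\begin{equation*}
N_k \;=\; \frac{1}{k!}\sum_{(v_1,\ldots,v_k)\in\Delta_k}\varphi_k(v_1,\ldots,v_k)
\end{equation*}
exactly as in the proof of Theorem~\ref{th: simp_mean}, and then take conditional expectation. By symmetry of $\varphi_k$ and linearity of expectation,
\begin{equation*}
\E{N_k\,|\,N_1=n} \;=\; \frac{1}{k!}\,\frac{n!}{(n-k)!}\,\E{\varphi_k(V_1,\ldots,V_k)} \;=\; \binom{n}{k}\,\E{\varphi_k(V_1,\ldots,V_k)}.
\end{equation*}

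Next I evaluate the probability $\E{\varphi_k(V_1,\ldots,V_k)}$ that $k$ uniform points form a $(k-1)$-simplex. Since each $V_i$ has density $a^{-d}$ on $\mathbb{T}_a^d$, this is $a^{-dk}$ times the Lebesgue integral of $\varphi_k$ over $\Delta_k$. But that integral was already computed in the proof of Theorem~\ref{th: simp_mean}: it equals $k^d(2\epsilon)^{d(k-1)}a^d$. Substituting,
\begin{equation*}
\E{\varphi_k(V_1,\ldots,V_k)} \;=\; \frac{k^d(2\epsilon)^{d(k-1)}a^d}{a^{dk}} \;=\; k^d\left(\frac{2\epsilon}{a}\right)^{d(k-1)},
\end{equation*}
which when multiplied by $\binom{n}{k}$ yields the stated formula.

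There is no genuine obstacle here; the only mildly subtle point is justifying that the Binomial process arises as the conditional distribution, which is a standard property of Poisson processes (the conditional law of the atoms given $N_1=n$ is that of $n$ i.i.d.\ points drawn from the intensity measure normalized to a probability, here uniform on $\mathbb{T}_a^d$). As a consistency check one recovers Theorem~\ref{th: simp_mean} by depoissonization: averaging $\binom{N_1}{k}$ against the Poisson law of $N_1$ with mean $\lambda a^d$ gives $(\lambda a^d)^k/k!$, and multiplying by $k^d(2\epsilon/a)^{d(k-1)}$ reproduces $\E{N_k}=\lambda a^d(\lambda(2\epsilon)^d)^{k-1}k^d/k!$.
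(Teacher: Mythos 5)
Your proof is correct, but it runs in the opposite direction from the paper's. The paper proves Corollary~\ref{cor:depoiss1} by \emph{depoissonization}: it starts from the unconditional mean of Theorem~\ref{th: simp_mean}, writes it as $\sum_n \E{N_k\mid N_1=n}\,e^{-\lambda}\lambda^n/n!$, and inverts the Poisson transform $\Theta$, using that the falling factorials $n!/(n-k)!$ are the preimage of $\lambda^k$ (and, implicitly, that $\Theta$ is injective). You instead compute $\E{N_k\mid N_1=n}$ directly from the conditional law of the process: given $N_1=n$ the atoms are $n$ i.i.d.\ uniform points, so by exchangeability $\E{N_k\mid N_1=n}=\binom{n}{k}\E{\varphi_k(V_1,\dots,V_k)}$, and the latter probability is $a^{-dk}$ times the integral $k^d(2\epsilon)^{d(k-1)}a^d$ already evaluated in the proof of Theorem~\ref{th: simp_mean}. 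Your route is self-contained and arguably cleaner: it does not require identifying the inverse image of $\lambda^k$ under $\Theta$ nor the injectivity of the transform (a point the paper glosses over, even misstating the identity as $\lambda^k=\sum_{n\ge k}\lambda^n e^{-\lambda}/n!$ without the factor $n!/(n-k)!$), at the cost of invoking the standard conditional-uniformity property of Poisson processes, which you correctly flag and which is the one genuinely probabilistic ingredient. Your consistency check recovering Theorem~\ref{th: simp_mean} from $\E{\binom{N_1}{k}}=(\lambda a^d)^k/k!$ is exactly the forward direction of the paper's argument, which confirms the two proofs are two faces of the same computation.
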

\begin{proof}
  According to Theorem~\ref{th: simp_mean}, we have:
  \begin{equation*}
    \frac{\lambda a^d(\lambda (2 \epsilon)^d)^{k-1}
      k^d}{k!}=\sum_{n=0}^\infty  \E{N_k\,|\, N_1=n} e^{-\lambda
      a^d}\frac{(\lambda a^d)^n}{n!}\cdotp
  \end{equation*}
  The principle of depoissonization is then to invert the transform
  $\Theta$ defined by:
  \begin{align*}
    \Theta \,:\, \R^\N & \longrightarrow \R[\lambda]\\
    (\alpha_n, \,n\ge 0) &\longmapsto \sum_{n\ge 0} \alpha_n
    e^{-\lambda} \frac{\lambda^n}{n!}\cdotp
  \end{align*}
  We have that $(\lambda a^d)^k=\sum_{n\geq
    k}\frac{n!}{(n-k)!}\frac{(\lambda a^d)^n}{n!}e^{-\lambda
    a^d}$. The result follows.
\end{proof}
\begin{remark}
  Considering the maximum norm simplifies the calculations.  However,
  even for the Euclidean norm, it is still possible to find a
  closed-form expression for $\E{N_2}$ and $\E{N_3}$ when we consider
  the Rips-Vietoris complex in $\mathbb{T}_a^2$. We are limited to
  small orders because no formula seems to be known for the area of
  the intersection of $k$ balls in general position. For $k=2$ and
  $3,$ the expectations are given by the following formulas:
  \begin{eqnarray*}
    \E{N_2}&=&\frac{\pi(a\lambda\epsilon)^2}{2},\\
    \E{N_3}&=&\pi\left( \pi-\frac{3\sqrt{3}}{4}\right)\frac{\lambda^3 a^2\epsilon^4}{6}\cdot
  \end{eqnarray*}
\end{remark}
Consider now the Bell's polynomial $B_d(x)$, defined as
(see~\cite{bell}):
\begin{eqnarray*}
  B_n(x)=\sum_{k=0}^{n}\stirling{n}{k}x^k,
\end{eqnarray*}
where $n$ is a positive integer and ${\stirling{n}{k}}$ is the
Stirling number of the second kind. An equivalent definition of $B_n$
can be:
\begin{eqnarray*}
  \label{eq: bell}
  B_n(x)=e^{-x}\sum_{k=0}^{\infty}\frac{x^kk^d}{k!}\cdotp
\end{eqnarray*}
These polynomials appear rather surprisingly in the computations of
the mean value of the Euler characteristic.
\begin{theorem}
  \label{th: chi_mean}
  The mean of the Euler characteristic of the simplicial complex
  $\mathcal{C}_{\epsilon}(\omega)$ is given by
  \begin{eqnarray*}
    \label{main_result_chi}
    \E{\chi}=-\left(\frac{a }{2\epsilon}\right)^de^{-\lambda(2\epsilon)^d} B_d(-\lambda(2\epsilon)^d).
  \end{eqnarray*}
\end{theorem}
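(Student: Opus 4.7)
The plan is to start from the combinatorial definition of the Euler characteristic and plug in the closed-form expression for $\E{N_k}$ obtained in Theorem \ref{th: simp_mean}.

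First I would write
\begin{equation*}
\chi = \sum_{i=0}^{\infty}(-1)^i s_i = \sum_{k=1}^{\infty}(-1)^{k-1} N_k.
\end{equation*}
For any realization $\omega$ with $N_1(\omega)=n<\infty$ (which happens almost surely by local finiteness of the Poisson process), only finitely many $N_k$ are nonzero, so $\chi$ is a well-defined integer-valued random variable.

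Next I would justify interchanging expectation and summation. Using the formula $\E{N_k}=\lambda a^d(\lambda(2\epsilon)^d)^{k-1}k^d/k!$ from Theorem \ref{th: simp_mean}, the ratio $\E{N_{k+1}}/\E{N_k} = \lambda(2\epsilon)^d (k+1)^{d-1}/k^d \to 0$, so $\sum_k \E{N_k}<\infty$. By Fubini-Tonelli, this absolute summability lets me write
\begin{equation*}
\E{\chi}=\sum_{k=1}^{\infty}(-1)^{k-1}\E{N_k}
=\left(\frac{a}{2\epsilon}\right)^d\sum_{k=1}^{\infty}(-1)^{k-1}\frac{(\lambda(2\epsilon)^d)^k k^d}{k!}.
\end{equation*}

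Finally I would set $x=-\lambda(2\epsilon)^d$ so that $(-1)^{k-1}(\lambda(2\epsilon)^d)^k = -x^k$, and (using that the $k=0$ term vanishes since $k^d=0$ for $d\ge 1$) recognize
\begin{equation*}
\sum_{k=1}^{\infty}(-1)^{k-1}\frac{(\lambda(2\epsilon)^d)^k k^d}{k!} = -\sum_{k=0}^{\infty}\frac{x^k k^d}{k!} = -e^x B_d(x),
\end{equation*}
by the alternative definition of the Bell polynomial quoted just above the statement. Substituting $x=-\lambda(2\epsilon)^d$ back gives exactly $(a/(2\epsilon))^d e^{-\lambda(2\epsilon)^d}(-B_d(-\lambda(2\epsilon)^d))$.

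There is no genuine obstacle here: the whole argument is essentially a series manipulation. The only thing requiring care is the justification of the interchange of $\E{\cdot}$ and the infinite sum, which follows immediately from the rapid decay of $\E{N_k}$ in $k$ provided by Theorem \ref{th: simp_mean}.
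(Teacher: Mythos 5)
Your proof is correct and follows essentially the same route as the paper: write $\chi$ as the alternating sum of the $N_k$, interchange expectation and summation, and recognize the exponential generating function of $k^d/k!$ as $e^xB_d(x)$. The only (immaterial) difference is the justification of the interchange — the paper dominates $\sum_k N_k$ by $e^{N_1}$ and uses $\E{e^{N_1}}<\infty$, whereas you verify $\sum_k\E{N_k}<\infty$ directly from Theorem~\ref{th: simp_mean} and apply Tonelli; both are valid.
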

\begin{proof}
  Since
  \begin{equation*}
    N_k\leq \frac{1}{k!} \prod_{j=0}^{k-1} (N_1-j)  \leq \frac{N_1^k}{k!}, \text{ then }
    \sum_{k=1}^{\infty} N_k\leq \sum_{k=1}^{\infty}\frac{N_1^k}{k!}
    \leq e^{N_1}.
  \end{equation*}
  As $\E{e^{N_1}}<\infty$, we have
  $\E{-\sum_{k=1}^{\infty}(-1)^kN_k}=-\sum_{k=1}^{\infty}(-1)^k\E{N_k}$
  and
  \begin{align*}
    \E{\chi}
    &=-\sum_{k=1}^{\infty}(-1)^k \frac{\lambda^{k}(ak(2\epsilon)^{k-1})^d}{k!}\nonumber\\
    &=\frac{a^d
      e^{-\lambda(2\epsilon)^d}}{-(2\epsilon)^d}{e^{\lambda(2\epsilon)^d}\sum_{k=0}^{\infty}\frac{(-\lambda(2\epsilon)^d)^kk^d}{k!}}\\
    &=-\left(\frac{a }{2\epsilon}\right)^de^{-\lambda(2\epsilon)^d}
    B_d(-\lambda(2\epsilon)^d).
  \end{align*}
  The proof is thus complete.
\end{proof}
If we take $d=1$, $2$ and $3$, we obtain:
\begin{align*}
  \E{\chi}&=a\lambda e^{-\lambda 2\epsilon}, \text{ for } d=1;\\
  \E{\chi}&=a^2 \lambda e^{-\lambda(2\epsilon)^2}\left(1-\lambda(2\epsilon)^2 \right), \text{ for } d=2;\\
  \E{\chi}&=a^3\lambda
  e^{-\lambda(2\epsilon)^3}\left(1-3\lambda(2\epsilon)^3+(\lambda(2\epsilon)^3)^2
  \right), \text{ for } d=3.
\end{align*}
The next corollary is an immediate consequence of
Corollary~\ref{cor:depoiss1}, obtained again by depoissonization.
\begin{corollary}
  The expectation of $\chi$ for a binomial point process with $n$
  points is given by:
  \begin{equation*}
    \E{\chi\,|\, N_1=n}=\sum_{k=0}^n (-1)^k \binom{n}{k}k^d \left(\frac{2\epsilon}{a}\right)^{d(k-1)}.
  \end{equation*}
\end{corollary}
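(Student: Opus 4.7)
The plan is to obtain this formula as the Binomial analogue of Theorem~\ref{th: chi_mean}: apply linearity of conditional expectation to the simplex-count expression $\chi=\sum_{k\geq 1}(-1)^{k-1}N_k$, where $N_k$ denotes the number of $(k-1)$-simplices, and then substitute the Binomial first moments provided by Corollary~\ref{cor:depoiss1}. The Binomial case should be considerably easier than the Poisson one because the relevant sum is automatically finite.

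The only genuinely non-routine ingredient is a pigeonhole observation: conditional on $N_1=n$, we have $N_k=0$ almost surely for every $k>n$, since forming a $(k-1)$-simplex requires $k$ distinct vertices and only $n$ are available. Hence $\chi$ reduces $\omega$-wise to a finite sum over $1\leq k\leq n$, and one can exchange summation and expectation without invoking any dominated convergence argument. This is in contrast to Theorem~\ref{th: chi_mean}, which needed the bound $\sum N_k\leq e^{N_1}$ together with $\E{e^{N_1}}<\infty$ to justify the interchange.

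Substituting $\E{N_k\mid N_1=n}=\binom{n}{k}k^d(2\epsilon/a)^{d(k-1)}$ from Corollary~\ref{cor:depoiss1} into each term of the now-finite alternating sum produces the closed form. Since $k^d$ vanishes at $k=0$ when $d\geq 1$, the summation index can be opened up to start at $k=0$, matching the range in the statement. The main obstacle is essentially nonexistent — once the finite-truncation at $k=n$ is noted, the proof is a single substitution, and the Bell-polynomial bookkeeping that was needed in the Poisson case becomes unnecessary because no generating-function identity is invoked.
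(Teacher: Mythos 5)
Your approach is exactly the paper's: the paper gives no explicit proof, stating only that the corollary is an immediate consequence of Corollary~\ref{cor:depoiss1}, i.e.\ one writes $\chi=\sum_{k\ge 1}(-1)^{k-1}N_k$, uses linearity (your observation that the sum truncates at $k=n$, making any convergence argument unnecessary, is correct and is the natural justification), and substitutes the depoissonized means. One caveat: carried out honestly, this substitution yields the \emph{alternating} sum $\sum_{k=1}^{n}(-1)^{k-1}\binom{n}{k}k^{d}\left(2\epsilon/a\right)^{d(k-1)}$, whereas both the displayed formula in the statement and your claim that the substitution ``produces the closed form'' drop the signs $(-1)^{k-1}$; this appears to be a typo in the paper's statement rather than a defect of your method, but you should not reproduce it.
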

So far, we have not say a word about Betti numbers. It turns out that
the preceding computations lead to a bound of the tail of $\beta_0$,
the number of connected components.
\begin{theorem}\label{chi_mean_thm:3}
  For $y>\lambda a^d$, we have
  \begin{eqnarray*}
    \Pr_\Lambda(\beta_0\geq y)\leq \exp\left(-\frac{y-\lambda a^d}{2}\log\left(1+ \frac{y-\lambda a^d}{(2^d-1)^2\lambda}\right) \right)\cdotp
  \end{eqnarray*}
\end{theorem}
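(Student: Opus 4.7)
The strategy is to apply the Poisson concentration inequality of Theorem~\ref{th:conc_inequal} to the functional $F=\beta_0$. This requires three ingredients: a pointwise upper bound on $D_t\beta_0$ to serve as the constant $K$, a uniform upper bound on $\|D\beta_0\|_{L^\infty(\Omega,L^2(\mathbb T_a^d))}^2$, and an explicit majorant for $\E{\beta_0}$.

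The starting point is the combinatorial form of the Malliavin derivative from Definition~\ref{eq: def malliavin}, $D_t\beta_0(\omega)=\beta_0(\omega\cup\{t\})-\beta_0(\omega)$. Let $m(t,\omega)$ denote the number of distinct connected components of $\mathcal C_\epsilon(\omega)$ which contain at least one vertex $v$ with $\|v-t\|_\infty<2\epsilon$. In $\mathcal C_\epsilon(\omega\cup\{t\})$ the new point $t$ is directly joined to every such $v$, so adding $t$ amounts to fusing those $m$ components into one (or creating a fresh isolated component when $m=0$). A direct case analysis yields $D_t\beta_0=1-m$ in every case, so in particular $D_t\beta_0\leq 1$ almost surely, and one may take $K=1$.

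The main geometric step, which I expect to be the only non-formal obstacle, is the deterministic inequality $m(t,\omega)\leq 2^d$. All vertices contributing to $m$ lie in the $L^\infty$-ball $B_{d_\infty}(t,2\epsilon)$, which is a cube of side $4\epsilon$. Partition this cube into $2^d$ subcubes of side $2\epsilon$; any two points of $\omega$ in the same subcube are at $L^\infty$-distance at most $2\epsilon$, and since the Poisson process almost surely does not put two of its atoms at exactly that distance, the inequality is strict with probability one, so the two points are joined by an edge of $\mathcal C_\epsilon(\omega)$ and lie in the same component. Hence at most $2^d$ distinct components can meet $B_{d_\infty}(t,2\epsilon)$, which gives $|D_t\beta_0|\leq 2^d-1$ almost surely. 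Integrating against the intensity $\lambda\,dt$ on the torus (using the normalization $a=1$ fixed at the beginning of the section) produces
\[
\|D\beta_0\|_{L^\infty(\Omega,L^2(\mathbb T_a^d))}^{2}\leq (2^d-1)^2\lambda.
\]

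Inserting $K=1$ and this $L^2$ bound into Theorem~\ref{th:conc_inequal} at the deviation $y-\E{\beta_0}>0$ gives
\[
\Pr_\lambda(\beta_0\geq y)\leq\exp\!\left(-\frac{y-\E{\beta_0}}{2}\log\!\left(1+\frac{y-\E{\beta_0}}{(2^d-1)^2\lambda}\right)\right).
\]
Since every connected component of $\mathcal C_\epsilon(\omega)$ contains at least one vertex, $\beta_0\leq N_1$, hence by the $k=1$ instance of Theorem~\ref{th: simp_mean} one has $\E{\beta_0}\leq\E{N_1}=\lambda a^d$. Lemma~\ref{lemma:dif_ic} asserts that the right-hand side above is non-decreasing in the quantity playing the role of $\E{\beta_0}$, so enlarging $\E{\beta_0}$ to $\lambda a^d$ only weakens the inequality and yields the announced bound. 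Apart from the pigeonhole argument in the third paragraph, the remainder of the proof is a direct application of the concentration inequality and the monotonicity lemma.
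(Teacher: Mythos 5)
Your proposal is correct and follows essentially the same route as the paper: bound $D_t\beta_0\le 1$ to get $K=1$, show the added point can merge at most $2^d$ components so that $\Vert D\beta_0\Vert^2_{L^\infty(\Omega,L^2)}\le(2^d-1)^2\lambda$, majorize $\E{\beta_0}$ by $\E{N_1}=\lambda a^d$, and invoke Theorem~\ref{th:conc_inequal} together with the monotonicity of Lemma~\ref{lemma:dif_ic}. Your pigeonhole partition of $B_{d_\infty}(t,2\epsilon)$ into $2^d$ subcubes of side $2\epsilon$ is in fact a cleaner justification of the $2^d$ bound than the paper's informal "two components per dimension" remark, but it is the same underlying idea.
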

\begin{proof}
  $\beta_0$ is the number of connected components. Since there are
  more points than connected components,
  $\E{\beta_0}\leq\E{N_1}=\lambda a^d$. According to the definition of
  $D$, $\sup_{x\in X} D_t\beta_0$ is the maximum variation of
  $\beta_0$ induced by the addition of an arbitrary point.  If the
  point $x$ is at a distance smaller than $\epsilon$ from $\omega$,
  then $D_x\beta_0\leq 0$, otherwise, $D_x\beta_0=1$, so
  $D_x\beta_0\le 1$ for any $x\in X$. Besides, this added point can
  join at most two connected components in each dimension, so in $d$
  dimensions it can join at most $2^d$ connected component, which
  means that $D\beta_0$ ranges from $-(2^d-1)$ to $1$, and then
  \begin{eqnarray*}
    \Vert D\beta_0\Vert_{L^{\infty}(L^{2}(X,\Lambda),\P)}\le \sup_\omega
     \int_{X} \vert D_x\beta_0\vert^2 \d \Lambda(x) \leq  \lambda a^d (2^d-1)^2.
  \end{eqnarray*}
  Since the function $f$ defined by \begin{equation*} f(x,y)=
    \exp\left(-\frac{k_1-x}{2k_2}\log\left(1+
        \frac{(k_1-x)k_2}{k_3y}\right) \right)\cdotp
  \end{equation*}
  is strictly increasing with respect to $x$ and $y$ for $k_1>x$, it
  follows from Theorem~\ref{th:conc_inequal} that:
  \begin{equation*}
    \Pr_\Lambda(\beta_0\geq y)\le \exp\left(-\frac{y-\lambda a^d}{2}\log\left(1+
        \frac{y-\lambda a^d}{(2^d-1)^2\lambda a^d}\right) \right),
  \end{equation*}
  for $y>\lambda a^d\ge \E{\beta_0}$.
\end{proof}

\section{Second order moments}
We now deal with the computations of the second order moments. The
proofs rely on the chaos decomposition of the number of simplices (see
Lemma \ref{lemma:f_chaos}) and the multiplication formula for iterated
integrals (see Proposition \ref{prop:chaos_prod}).  The computations
are rather technical and postponed to Appendix A. We make the
following convention: For any integer $k$,
\begin{equation*}
  \int_{X^0}\varphi_{k}^{(d)} (v_1,\,\cdots,\, v_{k}) \d v_1\dots \d v_0=\varphi_{k}^{(d)} (v_1,\,\cdots,\, v_{k}) .
\end{equation*}
\begin{lemma}
  \label{lemma:f_chaos}
  We can rewrite $N_k$ as
  \begin{eqnarray*}
    N_k=\frac{1}{k!}\sum_{i=0}^{k}{k\choose i}\lambda^{k-i}
    \ I_i\left(\int_{X^{k-i}} \varphi_{k}^{(d)} (v_1,\,\cdots,\, v_{k}) \d v_1\dots \d v_{k-i} \right).
  \end{eqnarray*}
\end{lemma}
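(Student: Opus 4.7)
The starting point is the integral representation used implicitly in the proof of Theorem~\ref{th: simp_mean}, which after reintroducing ordered tuples with the $1/k!$ normalisation reads
\begin{equation*}
N_k = \frac{1}{k!}\int_{\Delta_k} \varphi_k(v_1,\dots,v_k)\, \d\omega(v_1)\cdots \d\omega(v_k).
\end{equation*}
My plan is to read off the Wiener--Poisson chaos expansion of $N_k$ directly from this formula by writing each $\d\omega(v_j)$ as $(\d\omega(v_j)-\d\lambda(v_j)) + \d\lambda(v_j)$ and then expanding the product, a binomial expansion familiar from Malliavin calculus on Poisson space. Distributing the resulting $2^k$ terms gives
\begin{equation*}
\int_{\Delta_k} \varphi_k \prod_{j=1}^k \d\omega(v_j) = \sum_{S \subseteq \{1,\dots,k\}} \int_{\Delta_k} \varphi_k \prod_{j \in S}(\d\omega(v_j)-\d\lambda(v_j))\prod_{j \notin S}\d\lambda(v_j).
\end{equation*}

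Next I would exploit the symmetry $\varphi_k(v_1,\dots,v_k) = \prod_{1 \le \ell < j \le k}\indicator{[0,2\epsilon)}(\|v_\ell - v_j\|_\infty)$ to observe that all subsets $S$ of a given cardinality $i$ contribute the same quantity, so the $2^k$ terms collapse into $k+1$ groups of $\binom{k}{i}$ equal contributions. Fixing one representative subset of size $i$ and applying Fubini to pull the $k-i$ Lebesgue-type integrations past the $i$ compensated-Poisson integrations, with $\d\lambda(v)=\lambda\,\d v$ on the torus, produces a factor $\lambda^{k-i}$ in front of the symmetric square-integrable kernel
\begin{equation*}
(v_{k-i+1},\dots,v_k) \longmapsto \int_{\Delta_{k-i}} \varphi_k(v_1,\dots,v_k)\, \d v_1 \cdots \d v_{k-i}.
\end{equation*}
The outer integration of this kernel against the $i$-fold compensated Poisson measure is, by the very definition recalled in Section~3, equal to $I_i$ of that kernel. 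Multiplying by $\binom{k}{i}/k!$ and summing over $i$ yields the claimed identity.

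The main delicate point is the interaction between the domain restriction $\Delta_k$ and the splitting of $\d\omega$ into its compensated and intensity parts, since the multiple Poisson integral is constructed on a diagonal-free domain. This is handled by extending $\varphi_k$ by zero outside $\Delta_k$, which is harmless both because the Poisson process is simple, hence has pairwise distinct atoms, and because the extension alters the partial Lebesgue integrals only on null sets. Once this is justified, square-integrability of the resulting kernels is immediate from the boundedness of $\varphi_k$ and the compactness of $\mathbb{T}_a^d$, so the whole argument reduces to a symmetrised binomial identity.
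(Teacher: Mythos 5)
Your proposal is correct and takes essentially the same route as the paper: both rest on the binomial expansion relating $\d\omega$ and $\d\omega-\lambda\d v$ inside the $k$-fold integral of $\varphi_k$, combined with the symmetry of $\varphi_k$ and Fubini to integrate out the Lebesgue variables. You merely run the expansion forward (from $N_k$ to its chaos decomposition, grouping the $2^k$ subsets by cardinality) whereas the paper expands the claimed right-hand side and collapses the double binomial sum; these are mirror images of the same computation, and your explicit treatment of the diagonal sets is a welcome but minor addition.
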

\begin{proof}
  For $k=1$, the result is immediate with the convention made above.
  Once we have seen that
  \begin{multline*}
    \int\varphi_k^{(d)}(v_1,\dots,v_k)\d\omega^{(k)}(v_1,\, \cdots,\, v_k)\\
    =\int \left(\int \varphi_k^{(d)}(v_1,\dots,v_k)
      (d(\omega-\sum_{j=1}^{k-1}\delta(v_j))(v_k)-\d\Lambda(v_k))\right)\d\omega^{(k-1)}(v_1,\, \cdots,\, v_{k-1}) \\
    +  \int \left(\int_X \varphi_k^{(d)}(v_1,\dots,v_k) \d \Lambda
      (v_k)\right)\d\omega^{(k-1)}(v_1,\, \cdots,\, v_{k-1}),
  \end{multline*}
  the result follows by induction.
\end{proof}

\begin{theorem}
  \label{thm_cov_ksimp}
  The covariance between the number of $(k-1)$-simplices $N_k$, and
  the number of $(l-1)$-simplices, $N_l$ for $l\le k$ is given by
  \begin{multline*}
    \cov{N_k,N_l}=\sum_{i=1}^l \frac{\lambda a^d(\lambda
      (2\epsilon)^d)^{k+l-i-1}}{i!(k-i)!(l-i)!}\left(
      k+l-i+2\frac{(k-i)(l-i)}{i+1} \right)^d.
  \end{multline*}
\end{theorem}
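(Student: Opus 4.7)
The plan is to combine the chaos decomposition of Lemma~\ref{lemma:f_chaos} with the Wiener--Poisson isometry~\eqref{eq: isometry}. With $g_{k,i}(y):=\int_{\Delta_{k-i}}\varphi_k(v,y)\,dv$ a symmetric function of the $i$ variables $y=(y_1,\ldots,y_i)$, Lemma~\ref{lemma:f_chaos} gives $N_k-\E{N_k}=\sum_{i=1}^k \binom{k}{i}\lambda^{k-i}I_i(g_{k,i})/k!$, and similarly for $N_l$. Orthogonality of chaoses of different order then collapses the covariance to
\begin{equation*}
\cov{N_k,N_l}=\sum_{i=1}^{l}\frac{\lambda^{k+l-i}}{i!(k-i)!(l-i)!}\int g_{k,i}(y)g_{l,i}(y)\,dy,
\end{equation*}
using $\binom{k}{i}\binom{l}{i}i!/(k!\,l!) = 1/[i!(k-i)!(l-i)!]$ and absorbing the factor $\lambda^i$ coming from the Poisson inner product. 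Equivalently, the same identity follows by expanding $N_kN_l$ as a double iterated integral and applying the multivariate Mecke--Campbell formula, stratifying by the number $i$ of points shared between the two tuples.

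The remaining task is to evaluate $K_{k,l,i}:=\int g_{k,i}(y)g_{l,i}(y)\,dy$. Because the max-norm indicator $\prod_{a<b}\indicator{[0,2\epsilon)}(\|v_a-v_b\|_\infty)$ factorizes across the $d$ coordinates, $K_{k,l,i}$ equals the $d$-th power of its one-dimensional analogue, so it suffices to work in dimension one and raise to the $d$-th power at the end.

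In one dimension the condition that $k$ points be pairwise within $2\epsilon$ reduces to the single span condition $\max-\min<2\epsilon$, so $g_{k,i}(y)$ depends on $y$ only through $L_y:=\max y-\min y$. I would compute it by case analysis on how many of the $k-i$ free points $v$ fall strictly left of $\min y$, inside $[\min y,\max y]$, and strictly right of $\max y$. Each subcase reduces to a beta integral in the extremal $v$'s (using the densities of the min and the max of a collection of uniform variables), and summing the multinomial contributions via the binomial theorem collapses $g_{k,i}$ to the linear form
\begin{equation*}
g_{k,i}(L_y)=(2\epsilon)^{k-i-1}\bigl[(k-i+1)(2\epsilon)-(k-i)L_y\bigr],\qquad L_y<2\epsilon.
\end{equation*}

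Finally I integrate $g_{k,i}(L_y)g_{l,i}(L_y)$ over $y\in\mathbb{T}_a^i$. A Dirichlet-type change of variables to $(\min y, L_y, \text{intermediate points})$, together with the factor $a$ from translation invariance on the torus and the combinatorial multiplicity $i(i-1)$ selecting which two $y$'s realize the extremes, reduces the integral (for $i\ge 2$) to $a\,i(i-1)\int_0^{2\epsilon}L^{i-2}g_{k,i}(L)g_{l,i}(L)\,dL$, the case $i=1$ being a direct evaluation that matches the general answer. Expanding the product of two linear factors and combining the three resulting elementary integrals using the identity $-i+2+i(i-1)/(i+1)=2/(i+1)$, the 1D integral simplifies to exactly $a(2\epsilon)^{k+l-i-1}\bigl(k+l-i+2(k-i)(l-i)/(i+1)\bigr)$. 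Raising to the $d$-th power and inserting into the sum from the first step yields the stated formula. The main obstacle is the case analysis producing the linear formula for $g_{k,i}$ and verifying that the messy polynomial expression collapses to the clean combinatorial factor advertised in the theorem; everything else is routine manipulation.
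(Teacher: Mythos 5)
Your proposal is correct and arrives at the stated formula; the first half is identical to the paper's proof (chaos decomposition from Lemma~\ref{lemma:f_chaos}, isometry~\eqref{eq: isometry} collapsing the double sum to the diagonal chaos order $i$, the prefactor $\binom{k}{i}\binom{l}{i}i!/(k!l!)=1/(i!(k-i)!(l-i)!)$, and the max-norm factorization reducing everything to the $d$-th power of a one-dimensional integral). Where you genuinely diverge is in evaluating the resulting overlap integral $\mathcal{J}_2(l-i,k-i,i)$: the paper splits the domain into $A_1$ (all $M$ points within $2\epsilon$, reusing the first-moment computation) and $A_2$ (its complement), orders the points and grinds through nested antiderivatives $J_a^{(m)}$; you instead compute the partially integrated kernel in closed form, $g_{k,i}(L)=(2\epsilon)^{k-i-1}\bigl[(k-i+1)(2\epsilon)-(k-i)L\bigr]$ as an affine function of the span $L$ of the $i$ shared points, and then integrate the product of two such affine functions against the joint density $i(i-1)L^{i-2}$ of the range. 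I checked that this reproduces $\mathcal{J}_2=\bigl(M+\tfrac{2m_1m_2}{m_{12}+1}\bigr)(2\epsilon)^{M-1}a$ in dimension one (including the $i=1$ edge case and the identity $2-i+i(i-1)/(i+1)=2/(i+1)$), so the two computations agree. Your route has the advantage of exposing the intermediate kernel $g_{k,i}$ explicitly, which is reusable and makes the final algebra a clean one-variable beta-type integral; the paper's domain-splitting is messier here but is the template it later extends to $\mathcal{J}_n$ for $n$ overlapping simplices in the higher-moment theorems. One caveat you inherit from the paper rather than introduce: both arguments replace "pairwise within $2\epsilon$ on the circle" by "contained in an arc of length $2\epsilon$", which silently discards wrap-around configurations that exist once $\epsilon>a/6$; since the paper makes exactly the same reduction, this is not a gap specific to your proof.
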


\begin{remark}
  As for the first moment it is still possible to find, considering
  the Euclidean norm, a closed-form expression for $\var{N_k}$. We
  did not find a general expression for any dimension.
 However, when we consider the
  Rips-Vietoris complex in $\mathbb{T}_a^2$, the variance of the
  number of 1-simplices and 2-simplices are given by:
  \begin{eqnarray*}
    \var{N_2}=\left(\frac{a}{2\epsilon}\right)^2\left( \frac{\pi}{2}(4\lambda\epsilon^2)^2+\pi^2(4\lambda\epsilon^2)^3\right),
  \end{eqnarray*}
  and
  \begin{multline*}
    \var{N_3}=\left(\frac{a}{2\epsilon}\right)^2\left(
      (4\lambda\epsilon)^3\frac{\pi}{6}\left(
        \pi-\frac{3\sqrt{3}}{4}\right)+(4\lambda\epsilon^2)^4\pi\left(\frac{\pi^2}{2}-\frac{5}{12}-\frac{\pi\sqrt{3}}{2}\right)
    \right.\\ \left.  +(4\lambda \epsilon^2)^5\frac{\pi^2}{4}\left(
        \pi-\frac{3\sqrt{3}}{4}\right)^2\right)\cdotp
  \end{multline*}
\end{remark}

Since we have an expression for the variance of the number of
$k$-simplices, it is possible to calculate the variance of the Euler
characteristic.

\begin{theorem}
  \label{cor:var_chi}
  The variance of the Euler characteristic is:
  \begin{eqnarray*}
    \var{\chi}=\lambda a^d \sum_{n=1}^{\infty}c_n^d(\lambda(2\epsilon)^d)^{n-1},
  \end{eqnarray*}
  where
  \begin{multline*}
    c_n^d=\sum_{j=\lceil (n+1)/2\rceil}^n\left[2\sum_{i=n-j+1}^{j}
      \frac{(-1)^{i+j}}{(n-j)!(n-i)!(i+j-n)!}{\left(n+\frac{2(n-i)(n-j)}{1+i+j-n}
        \right)^d}\right.\\
    \left.-\frac{1}{(n-j)!^2(2j-n)!}{\left(n+\frac{2(n-j)^2}{1+2j-n}
        \right)^d}\right].
  \end{multline*}
\end{theorem}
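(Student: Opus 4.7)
The starting point is the identity $\chi = \sum_{k \geq 1}(-1)^{k-1}N_k$, which is almost surely a finite sum since $N_k = 0$ as soon as $k > N_1$. To justify
\[
  \var{\chi} = \sum_{k, l \geq 1} (-1)^{k+l} \cov{N_k, N_l},
\]
I would repeat at the $L^2$ level the argument from the proof of Theorem~\ref{th: chi_mean}: the domination $N_k N_l \leq N_1^{k+l}/(k!\,l!)$ yields $\sum_{k,l}\E{N_k N_l} \leq \E{e^{2 N_1}} < \infty$, which allows interchanging the expectation with the double series in both $\E{\chi^2}$ and $\E{\chi}^2$.

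Next, I would insert the formula from Theorem~\ref{thm_cov_ksimp}, extended by symmetry to all pairs $(k,l)$ with the inner index $p$ running from $1$ to $\min(k,l)$:
\[
  \cov{N_k,N_l} = \sum_{p=1}^{\min(k,l)} \frac{\lambda a^d (\lambda(2\epsilon)^d)^{k+l-p-1}}{p!\,(k-p)!\,(l-p)!}\Bigl(k+l-p + \tfrac{2(k-p)(l-p)}{p+1}\Bigr)^d.
\]
The decisive change of variable is $n = k+l-p$. Under it, the exponent of $\lambda(2\epsilon)^d$ becomes $n-1$, the constraint $p \geq 1$ reads $k+l \geq n+1$, and $p \leq \min(k,l)$ reads $\max(k,l) \leq n$. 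Swapping the orders of summation puts $n$ outside and gives
\[
  \var{\chi} = \lambda a^d \sum_{n \geq 1}(\lambda(2\epsilon)^d)^{n-1} \sum_{\substack{1 \leq k,l \leq n\\ k+l \geq n+1}} \frac{(-1)^{k+l}}{(n-k)!\,(n-l)!\,(k+l-n)!}\Bigl(n + \tfrac{2(n-k)(n-l)}{k+l-n+1}\Bigr)^d.
\]

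To match the stated form of $c_n^d$, I would rename $(k,l)\to(i,j)$ and exploit the $i \leftrightarrow j$ symmetry of the summand, writing the inner sum as twice the contribution from $i \leq j$ minus the diagonal $i=j$. On the diagonal, combining $i=j$ with $i+j\geq n+1$ forces $j \geq \lceil(n+1)/2\rceil$, which is precisely the lower bound on the outer index in the theorem; for the off-diagonal part, $i+j \geq n+1$ together with $i \leq j$ delivers the inner range $n-j+1 \leq i \leq j$. Reading off the summands reproduces the claimed expression for $c_n^d$.

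The work is essentially bookkeeping: sign-tracking in $(-1)^{k+l}$, matching the three factorials $(n-i)!(n-j)!(i+j-n)!$, and converting the constraints $i+j \geq n+1$ and $i,j \leq n$ into the form displayed in the theorem. The one genuinely analytic step is the $L^2$ interchange in the first paragraph; beyond that, the proof is a purely algebraic rearrangement of the formula of Theorem~\ref{thm_cov_ksimp}, and the main risk of error lies in carefully separating the diagonal contribution from the doubled off-diagonal one.
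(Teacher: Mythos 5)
Your proposal is correct and follows essentially the same route as the paper: the same domination $N_k\le N_1^k/k!$ to justify interchanging the expectation with the double series, followed by substitution of the covariance formula of Theorem~\ref{thm_cov_ksimp}. You in fact supply more detail than the paper, whose proof ends with ``the result follows by Theorem~\ref{thm_cov_ksimp}'' and leaves implicit the reindexing $n=k+l-p$ and the diagonal versus doubled off-diagonal bookkeeping, all of which you carry out correctly.
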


\begin{theorem}
  \label{cor:var_chi_1d}
  In one dimension, the expression of the variance of the Euler
  characteristic is:
  \begin{eqnarray*}
    \var{\chi}={a}\left(\lambda e^{-2\lambda\epsilon}-4\lambda^2\epsilon
      e^{-4\lambda\epsilon}\right).
  \end{eqnarray*}
\end{theorem}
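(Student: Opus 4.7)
The plan is to derive the one-dimensional variance directly from Theorem~\ref{thm_cov_ksimp}, bypassing Theorem~\ref{cor:var_chi}: although the result is stated as a corollary, the cleanest route is to go back to the covariance formula and exploit its exponential generating function structure. Starting from $\chi = \sum_{k\ge 1}(-1)^{k-1}N_k$ (with the $L^2$ convergence justified exactly as in the proof of Theorem~\ref{th: chi_mean}, since $\sum_k N_k \le e^{N_1}$ and Poisson moments of all orders are finite), we have
\begin{equation*}
\var{\chi} = \sum_{k,l\ge 1}(-1)^{k+l}\cov{N_k,N_l}.
\end{equation*}

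First I would note that in Theorem~\ref{thm_cov_ksimp} the quantity
\begin{equation*}
\phi(k,l,i) := \frac{1}{i!(k-i)!(l-i)!}\Bigl(k+l-i+\tfrac{2(k-i)(l-i)}{i+1}\Bigr)
\end{equation*}
is symmetric in $k$ and $l$, so the restriction $l\le k$ may be dropped if $i$ ranges over $1\le i\le \min(k,l)$. Setting $d=1$ and substituting,
\begin{equation*}
\var{\chi} = \lambda a \sum_{k,l\ge 1}(-1)^{k+l}\sum_{i=1}^{\min(k,l)}(2\lambda\epsilon)^{k+l-i-1}\phi(k,l,i).
\end{equation*}
Next I would change variables to $m=k-i$, $n=l-i$ (both $\ge 0$) and swap the order of summation; since $(-1)^{k+l}=(-1)^{m+n}$,
\begin{equation*}
\var{\chi} = \lambda a \sum_{i\ge 1}\frac{(2\lambda\epsilon)^{i-1}}{i!}\sum_{m,n\ge 0}\frac{(-2\lambda\epsilon)^{m+n}}{m!\,n!}\Bigl(m+n+i+\tfrac{2mn}{i+1}\Bigr).
\end{equation*}

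With $x=2\lambda\epsilon$, the inner double sum factors over $m$ and $n$. Using the elementary identities $\sum_m(-x)^m/m! = e^{-x}$, $\sum_m m(-x)^m/m! = -x e^{-x}$, one finds the bracket evaluates to $e^{-2x}\bigl(i-2x+\tfrac{2x^2}{i+1}\bigr)$. The remaining series in $i$ splits into three pieces that are all standard:
\begin{equation*}
\sum_{i\ge 1}\frac{x^{i-1}}{(i-1)!}=e^x,\quad -2\sum_{i\ge 1}\frac{x^i}{i!}=-2(e^x-1),\quad 2\sum_{i\ge 1}\frac{x^{i+1}}{(i+1)!}=2(e^x-1-x),
\end{equation*}
whose sum is $e^x-2x$. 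Combining everything yields $\var{\chi} = \lambda a\, e^{-2x}(e^x-2x) = \lambda a(e^{-x}-2xe^{-2x})$, which with $x=2\lambda\epsilon$ is exactly the claimed formula.

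The only real obstacle is bookkeeping: keeping track of the signs and factorials through the change of variables, and verifying that the double series is absolutely convergent so that Fubini applies. Absolute convergence follows from the dominated estimate $\sum_{k,l,i}(2\lambda\epsilon)^{k+l-i-1}|\phi(k,l,i)|<\infty$, which after the same change of variables reduces to a product of convergent exponential series. No novel ingredient beyond what is already in Theorem~\ref{thm_cov_ksimp} is needed.
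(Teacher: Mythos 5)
Your proposal is correct, and it reaches the stated formula by a genuinely different route from the paper. The paper first proves Theorem~\ref{cor:var_chi}, which repackages the double sum $\sum_{k,l}(-1)^{k+l}\cov{N_k,N_l}$ into a single power series $\lambda a^d\sum_n c_n^d(\lambda(2\epsilon)^d)^{n-1}$ with an explicit but unwieldy coefficient $c_n^d$; the $d=1$ case is then handled by splitting $c_n^1$ into two pieces $\alpha_n$ and $\beta_n$ and evaluating each via alternating binomial sums, Stiffel's relation, and identities for hypergeometric functions, before resumming the resulting power series into exponentials. You instead keep the three summation indices $k,l,i$ separate, substitute $m=k-i$, $n=l-i$, and observe that for $d=1$ the weight $\bigl(m+n+i+\tfrac{2mn}{i+1}\bigr)$ is a low-degree polynomial in $m$ and $n$, so the double sum over $m,n$ factors into elementary exponential series, leaving a single series in $i$ that telescopes to $e^x-2x$. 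I checked the computation: the inner sum is indeed $e^{-2x}\bigl(i-2x+\tfrac{2x^2}{i+1}\bigr)$ and the outer sum gives $e^x-2x$, yielding $\lambda a\,e^{-2x}(e^x-2x)$ with $x=2\lambda\epsilon$, which is the claimed expression. Your Fubini justification (domination by $e^{cN_1}$-type quantities with finite Poisson moments, plus absolute convergence of the triple series after the change of variables) matches what the paper does for the interchange of expectation and summation. What your approach buys is the complete avoidance of the combinatorial identities in the paper's Appendix A.3; what it gives up is generality: the factorization over $m$ and $n$ relies on $d=1$ making the weight linear rather than a $d$-th power, whereas the paper's organization via $c_n^d$ is the natural intermediate object for arbitrary dimension.
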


\begin{theorem}
  If $d=2$, we have $D\chi \le 2$ and thus
  \begin{equation*}
    \Pr(\chi-\esp{\chi}\ge x)\le
    \exp\left(-\frac{x}{4}\log\left(1+\frac{x}{2 \lambda a^2} \right) \right)\cdotp
  \end{equation*}  
\end{theorem}
\begin{proof}
  In two dimensions, according to Proposition~\ref{prop: beta_d}, the
  Euler characteristic is given by:
  \begin{equation*}
    \chi = \beta_0 - \beta_1 + \beta_2.
  \end{equation*}
  If we add a vertex on the torus, either the vertex is isolated or
  not. In the first case, it forms a new connected component
  increasing $\beta_0$ by $1$, and the number of holes,
  i.e. $\beta_1$, remains the same. Otherwise, as there is no new
  connected component, $\beta_0$ is the same, but the new vertex can
  at most fill one hole, increasing $\beta_1$ by $1$. Therefore, the
  variation of $\beta_0 - \beta_1$ is at most $1$.

  Furthermore, when we add a vertex to a simplicial complex, we know
  from Proposition~\ref{prop:chi_interpretation} that $D\beta_2\le 1$
  hence $D\chi \le 2$. Then, we use Eq.~\eqref{eq: conc_inequal} to
  complete the proof.
\end{proof}

\section{Third  order moments}
Higher order moments can be computed in a similar way but the
computations become trickier as the order increases. We here restrict
our computations to the third order moments to illustrate the general
procedure. The proof is given in Appendix B.

We are interested in the central moment, so we introduce the following
notation for the centralized number of $(k-1)$-simplices:
$\widetilde{N_k}=N_k-\E{N_k}$.

\begin{theorem}
  \label{thm_3mom}
  The third central moment of the number of $(k-1)$-simplices is given by:
  \begin{eqnarray*}
    \E{\widetilde{N_k}^3}=\sum_{i,j,s,t} 
    \frac{\lambda^{3k-i-j} t! }{(k!)^3}\binom{k}{i} \binom{k}{j} \binom{k}{s} \binom{i}{t} \binom{j}{t}
    \binom{t}{i+j-s-t} \mathcal{J}_3 (k,i,j,s,t),
  \end{eqnarray*}
  with $s \ge |i-j|$, and $\mathcal{J}_3 (k,i,j,s,t)$ is an integral
  depending on $k,i,j,s$ and $t$, defined below in \eqref{eq_chi_mean_v3:1}.
\end{theorem}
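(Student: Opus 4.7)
The plan is to mirror the computation of the variance in Theorem~\ref{thm_cov_ksimp}: combine the Wiener--Poisson chaos expansion of Lemma~\ref{lemma:f_chaos} with the product formula of Proposition~\ref{prop:chaos_prod} and with the isometry~\eqref{eq: isometry}. First I would centre the expansion: the $i=0$ term in Lemma~\ref{lemma:f_chaos} is exactly $\E{N_k}$, so
$$\widetilde{N_k}=\frac{1}{k!}\sum_{i=1}^{k}\binom{k}{i}\lambda^{k-i}\,I_i(f_{k,i}),$$
where $f_{k,i}$ denotes the symmetric kernel on $i$ variables obtained from $\varphi_k$ by integrating out $k-i$ of its arguments against Lebesgue measure. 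Cubing and taking expectations reduces the problem to a triple sum over $(i,j,s)\in\{1,\dots,k\}^3$ of quantities $\E{I_i(f_{k,i})\,I_j(f_{k,j})\,I_s(f_{k,s})}$, each weighted by $\binom{k}{i}\binom{k}{j}\binom{k}{s}/(k!)^3$ together with a power of $\lambda$ coming from the compensators.

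The second step is algebraic. I would apply Proposition~\ref{prop:chaos_prod} to $I_i(f_{k,i})\,I_j(f_{k,j})$, writing it as $\sum_{r} I_{i+j-r}(h_r)$, then multiply by $I_s(f_{k,s})$ and take expectations using the isometry~\eqref{eq: isometry}. Only the chaos of order $s$ survives, forcing $r=i+j-s$ and the constraint $0\le i+j-s\le 2(i\wedge j)$, which is equivalent to $s\ge|i-j|$, exactly the summation condition announced in the theorem. Substituting $r=i+j-s$ in the coefficient of Proposition~\ref{prop:chaos_prod} produces the combinatorial prefactor $t!\binom{i}{t}\binom{j}{t}\binom{t}{i+j-s-t}$, summed over $t$ with $(i+j-s)/2\le t\le i\wedge j$, and the expectation itself becomes, up to an $s!$ from the isometry and the appropriate $\lambda$ powers, a weighted sum of inner products
$$\bigl\langle f_{k,i}\otimes_{t}^{\,i+j-s-t} f_{k,j},\,f_{k,s}\bigr\rangle_{L^2(\lambda)^{\circ s}}.$$
Here the symmetrised contraction $\circ$ may be replaced by the unsymmetrised $\otimes$ thanks to the symmetry of $f_{k,s}$; the integer $t$ records the number of vertices of the first simplex identified with vertices of the second, and $i+j-s-t$ of these identifications are then integrated out.

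The third step is the geometric identification. Unwinding each $f_{k,\cdot}$ back into its $\varphi_k$ form, the inner product above becomes a triple Lebesgue integral of $\varphi_k\cdot\varphi_k\cdot\varphi_k$ attached to three simplices $S_1,S_2,S_3$ of $k$ vertices each, with a vertex-identification pattern determined by $(i,j,s,t)$: $t$ vertices of $S_1$ coincide with vertices of $S_2$, and $s$ vertices of $S_3$ coincide with vertices of the combined $S_1\cup S_2$ configuration. Counting the resulting total number $M=M(k,i,j,s,t)$ of distinct vertices and restricting the integration domain to $\Delta_M$, one recognises this Lebesgue integral as $\mathcal{J}_3(k,i,j,s,t)$. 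Collecting the prefactors from the chaos expansion, from the product formula, from the isometry and from the various $\lambda$-integrations, and simplifying, produces the announced triple sum. The main obstacle is purely combinatorial: carefully separating integrated from merely identified variables across the three chaoses, controlling the permutation factors that arise in passing from $\circ$ to $\otimes$ and in enumerating the identification patterns, and reconciling all the $\lambda$-powers into the single exponent $\lambda^{3k-i-j}$ displayed in the statement.
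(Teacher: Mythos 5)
Your proposal follows essentially the same route as the paper's proof: center via Lemma~\ref{lemma:f_chaos}, apply the product formula of Proposition~\ref{prop:chaos_prod} to two of the three chaoses, use the isometry~\eqref{eq: isometry} to force the surviving order to be $s$ (whence $s\ge|i-j|$ and the prefactor $t!\binom{i}{t}\binom{j}{t}\binom{t}{i+j-s-t}$), and identify the remaining kernel integral with $\mathcal{J}_3(k,i,j,s,t)$ through the vertex-identification pattern. This matches the paper's argument step for step, including the final bookkeeping of the $\lambda$-powers and the relaxation of the summation bounds.
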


\section{Convergence}
Before going further, we must answer a natural question: Do we
retrieve the torus homology when the intensity of the Poisson process
goes to infinity, so that the number of points becomes arbitrary
large~? The answer is positive as shows the next theorem.
\begin{theorem}
  \label{chi_mean_thm:2}
  The Betti numbers of ${\mathcal C_{\epsilon}}(\omega)$ converge in
  probability to the Betti numbers of the torus as $\lambda$ goes to
  infinity:
  \begin{equation*}
    \Pr_\Lambda\left(\bigcap_{i=0}^d
      \left(\beta_i(\omega)=\binom{d}{i}
        \right)\right)\xrightarrow{\lambda\to \infty} 1,
  \end{equation*}
where $\binom{d}{i}$ is the $i$-th Betti number of the $d$-dimensional
torus, see \cite{hatcher}.
\end{theorem}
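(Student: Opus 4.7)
The plan is to reduce the conclusion to a coverage event for $\mathcal{U}_\epsilon(\omega)$. By Theorem~\ref{th: cech}, since $\epsilon<a/4$, the \u{C}ech complex $\mathcal{C}_\epsilon(\omega)$ has the homotopy type of $\mathcal{U}_\epsilon(\omega)$. Consequently, on the event $\{\mathcal{U}_\epsilon(\omega)=\mathbb{T}_a^d\}$ the complex has the same homology as the torus, and every Betti number $\beta_i(\mathcal{C}_\epsilon)$ equals $\beta_i(\mathbb{T}_a^d)$. It therefore suffices to prove
\begin{equation*}
  \Pr_\lambda\bigl(\mathcal{U}_\epsilon(\omega)=\mathbb{T}_a^d\bigr)\xrightarrow{\lambda\to\infty} 1.
\end{equation*}

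The key step is a simple partition argument adapted to the product distance $d_\infty$. Fix an integer $m$ large enough that $s:=a/m<\epsilon$, and partition $\mathbb{T}_a^d$ into $N=m^d$ half-open cubes $C_1,\dots,C_N$, each of side length $s$. If a cube $C_i$ contains some atom $v\in\omega$, then every $x\in C_i$ satisfies $\|x-v\|_\infty\le s<\epsilon$, so $C_i\subset B_{d_\infty}(v,\epsilon)\subset \mathcal{U}_\epsilon(\omega)$. Hence on the event that every cube contains at least one atom of $\omega$, one has $\mathcal{U}_\epsilon(\omega)=\mathbb{T}_a^d$ as subsets of the torus.

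Each cube has Lebesgue measure $s^d$, so by independence of Poisson counts on disjoint sets together with the union bound,
\begin{equation*}
  \Pr_\lambda\bigl(\mathcal{U}_\epsilon(\omega)\neq\mathbb{T}_a^d\bigr)\leq \sum_{i=1}^N \Pr_\lambda(\omega(C_i)=0)=m^d\,e^{-\lambda s^d}\xrightarrow{\lambda\to\infty} 0,
\end{equation*}
which together with the homotopy equivalence of Theorem~\ref{th: cech} yields the result. The only obstacle is geometric rather than probabilistic: one must choose the grid so that it tiles the torus exactly, and with side length $s$ strictly less than $\epsilon$, so that any open $d_\infty$-ball of radius $\epsilon$ centered at a point of a cube fully contains that cube. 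Both requirements are fulfilled by the choice $s=a/m$ above.
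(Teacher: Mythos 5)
Your proof is correct and follows essentially the same route as the paper: reduce the statement to the coverage event $\{\mathcal{U}_\epsilon(\omega)=\mathbb{T}_a^d\}$ via Theorem~\ref{th: cech} and the nerve lemma, then bound the probability of non-coverage by a union bound over a finite family of small regions each of which lies inside a ball around any of its points. The only (immaterial) difference is that you tile the torus by cubes of side $a/m<\epsilon$ where the paper covers it by balls of radius $\eta<\epsilon/2$.
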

\begin{proof}
  Let $\eta<\epsilon/2$, by compactness of the torus, there exists
  ${\mathfrak B}$ a finite collection of balls of radius $\eta$
  covering $\mathbb{T}^d_{a}$. Since $\eta<\epsilon/2$, if $x$ belongs
  to some ball $B\in {\mathfrak B}$ then $B\subset B(x,\, \epsilon)$,
  hence
  \begin{equation*}
    \bigcap_{B\in {\mathfrak B} } (\omega(B)\neq 0)\subset
    \left(\U_\epsilon(\omega)=\mathbb{T}^d_{a}\right). 
  \end{equation*}
  Thus,
  \begin{align*}
    \P_\Lambda\left(\U_\epsilon(\omega)\neq
      \mathbb{T}^d_{a}\right)&\le \P_\Lambda\left(\bigcup_{B\in {\mathfrak B} }    (\omega(B)= 0)\right)\\
    &\le K \exp(-\lambda (2\eta)^d)\xrightarrow{\lambda\to \infty} 0.
  \end{align*}
  Moreover, by the nerve lemma
  \begin{equation*}
    \left(\U_\epsilon(\omega)=\mathbb{T}^d_{a}\right)\subset \bigcap_{i=0}^d\left(\beta_i(\omega)=\binom{d}{i}\right),
  \end{equation*}
  and the result follows.
\end{proof}

Let $\Gamma$ be an arbitrary connected simplicial complex of $n$
vertices.
The number of occurrences of $\Gamma$ in
$C_\epsilon(\omega)$ is denoted as $G_\Gamma(\omega)$. It must be
noted that with our construction of the simplicial complex, a complex
$\Gamma$ appears in $C_\epsilon(\omega)$ as soon as its edges are in
$C_\epsilon(\omega)$.  The set of edges of $\Gamma$, denoted by
$J_\Gamma$ is a subset of $\{1,\, \dots,\, n\}\times \{1,\, \dots,\,
n\}$.  Let us define the following function on the vertices
of~$\Gamma$:
\begin{equation*}
  \widetilde{h^\Gamma}(v_1,\dots,\, v_n)=\frac{1}{c_{\Gamma}}\prod_{(i,j)\in J_\Gamma}\indicator{\rho_d(v_i,v_j)<\epsilon},  
\end{equation*}
where $c_{\Gamma}$ is the number of permutations $\sigma$ of $\{v_1,\,
\dots,\, v_n\}$ such that
\begin{equation*}
  \widetilde{h^\Gamma}(v_1,\, \dots,\, v_n)=\widetilde{h^\Gamma}(v_{\sigma(1)},\, \dots,\, v_{\sigma(n)}),
\end{equation*}
and let $f^\Gamma$ be the symmetrization of
$\widetilde{h^\Gamma}$. Then, we have:
\begin{equation}\label{chi_mean_eq:2}
  G_\Gamma(\omega)=\int f^\Gamma(v_1,\dots,v_n)
  \d\omega^{(n)}(v_1,\, \cdots,\, v_n). 
\end{equation}


\begin{lemma}
  \label{lemma: chaos}
  The random variable $G_\Gamma$ has a chaos representation given by:
  \begin{eqnarray*}
    G_\Gamma=\sum_{i=0}^n I_i(f^\Gamma_i),
  \end{eqnarray*}
  where $f^\Gamma_i$ is the bounded symmetric function defined as
  \begin{equation}
    \label{eq: f_i}
    f^\Gamma_i(v_{i+1},\,\dots,\, v_n)={n\choose i}\lambda^{n-i}\int\limits_{X^{n-i}}
    f^\Gamma(v_1,\dots,v_n) \d v_1\dots  \d v_{n-i}.
  \end{equation}
\end{lemma}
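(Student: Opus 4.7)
The plan is to start from the representation
\eqref{chi_mean_eq:2} of $G_\Gamma$ as an iterated integral of $f^\Gamma$ against $n$ copies of $d\omega$, and expand each $d\omega(v_j)$ according to the identity $d\omega(v_j)=(d\omega(v_j)-\lambda\,dv_j)+\lambda\,dv_j$. Expanding the product of $n$ such differentials produces $2^n$ terms indexed by subsets $S\subseteq\{1,\dots,n\}$: for $j\in S$ one keeps the compensated differential $d\omega(v_j)-\lambda\,dv_j$, and for $j\notin S$ one keeps the deterministic factor $\lambda\,dv_j$. This is exactly the manipulation carried out for $N_k$ in Lemma~\ref{lemma:f_chaos}, and the same bookkeeping applies here.

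Next, I would use the fact that $f^\Gamma$ is symmetric (by construction) to group together all subsets $S$ of the same cardinality $i$. After relabeling the variables, the $\binom{n}{i}$ terms with $|S|=i$ contribute identically, giving
\begin{equation*}
  \binom{n}{i}\lambda^{n-i}\int_{\Delta_n}f^\Gamma(v_1,\dots,v_n)\,dv_1\cdots dv_{n-i}\,(d\omega(v_{n-i+1})-\lambda\,dv_{n-i+1})\cdots(d\omega(v_n)-\lambda\,dv_n).
\end{equation*}
Integrating out the first $n-i$ variables against $\lambda$ collapses the integrand to the function $f^\Gamma_i(v_{n-i+1},\dots,v_n)$ defined in \eqref{eq: f_i}, which is again symmetric because $f^\Gamma$ is. Recognizing the remaining integral as the multiple Poisson stochastic integral $I_i(f^\Gamma_i)$, summing over $i=0,\dots,n$, and noting that the expansion terminates at $i=n$ (since $f^\Gamma$ only has $n$ arguments) yields the claimed chaos decomposition.

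The step that requires the most care is the passage between the iterated symbolic integral of atomic measures on the off-diagonal $\Delta_n$ and the rigorous definition of $I_i$ on symmetric $L^2$-kernels. One justifies this as follows: $f^\Gamma$ is bounded and supported on a bounded set, so all partial integrals with respect to $\lambda$ land in $L^2(\lambda)^{\circ i}$, and the diagonal sets $\{v_j=v_k\}$ have $\lambda$-measure zero so can be added or removed freely in deterministic integrals; the compensated integrals against $(d\omega-\lambda\,dv)$ restricted off the diagonal match the multiple Poisson integral on simple symmetric kernels and then on general symmetric $L^2$ kernels by the isometry \eqref{eq: isometry}. Since $f^\Gamma$ is bounded with bounded support, approximation by simple functions poses no difficulty, and the identification $I_i(f^\Gamma_i)$ follows. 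The main technical obstacle is therefore purely one of careful bookkeeping between the off-diagonal integral representation and the chaos integrals, rather than any genuine analytic difficulty.
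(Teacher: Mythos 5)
Your proof is correct and follows essentially the same route as the paper: expand each $\d\omega(v_j)$ as $(\d\omega(v_j)-\lambda\d v_j)+\lambda\d v_j$, group the $\binom{n}{i}$ terms of each cardinality using the symmetry of $f^\Gamma$, integrate out the deterministic variables to obtain $f^\Gamma_i$, and note boundedness from the compactness of $B$. The paper compresses this into ``binomial expansion and some algebra'' (the same device as in Lemma~\ref{lemma:f_chaos}); your added care about identifying the off-diagonal compensated integrals with $I_i$ on symmetric $L^2$ kernels is a welcome but not divergent elaboration.
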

\begin{proof}
  From \eqref{chi_mean_eq:2}, using the binomial expansion and some
  algebra, we obtain
  \begin{multline*}
    G_\Gamma(\omega)\\=\sum_{i=0}^{n}\int\left({n\choose
        i}\int\limits_{X^{n-i}} f^\Gamma(v_1,\dots,v_n)\lambda \d
      v_1 \dots \lambda \d
      v_{n-i}\right)\d\omega_\Lambda^{(i)}(v_{n-i+1},\, \cdots,\, v_n).
  \end{multline*}
  We define, for any $i\in \{1,\,\dots,\, n\}$,
  \begin{equation*}
    f^\Gamma_i(v_{i+1},\,\dots,\, v_n)={n\choose i}\lambda^{n-i}\int_{X^{n-i}}
    f^\Gamma(v_1,\dots,v_n) \d v_1\dots  \d v_{n-i}.
  \end{equation*}
  To conclude the proof, we note that, since $X$ is a bounded set and
  $h^\Gamma$ is bounded, $f^\Gamma_i$ is bounded.
\end{proof}

\begin{theorem}
  \label{prop: reminder}
  There exists $c>0$ such that for any~$\lambda \ge 1$,
  \begin{eqnarray*}
    d_{W}\left(\dfrac{G_\Gamma-\E{G_\Gamma}}{\sqrt{\var{G_\Gamma}}},\ \mathcal{N}(0,1)\right)\leq \frac{c}{\lambda^{1/2}}\cdotp
  \end{eqnarray*}
\end{theorem}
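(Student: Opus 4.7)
The plan is to apply Theorem~\ref{thm_chi_mean_v2:1} to the standardized variable $F=(G_\Gamma-\E{G_\Gamma})/\sigma_\lambda$, where $\sigma_\lambda^2=\var{G_\Gamma}$. Lemma~\ref{lemma: chaos} gives the chaos decomposition $G_\Gamma-\E{G_\Gamma}=\sum_{i=1}^n I_i(f_i^\Gamma)$, from which, by Definition~\ref{eq: def malliavin} and the definition of $L^{-1}$, one obtains
\begin{equation*}
  D_tF = \frac{1}{\sigma_\lambda}\sum_{i=1}^n i\, I_{i-1}(f_i^\Gamma(*,t)),
  \qquad D_tL^{-1}F = -\frac{1}{\sigma_\lambda}\sum_{i=1}^n I_{i-1}(f_i^\Gamma(*,t)).
\end{equation*}
Thus the task reduces to bounding $\E{|1-\langle DF,\,-DL^{-1}F\rangle_{L^2(\lambda)}|}$ and $\int\E{|D_tF|^2|D_tL^{-1}F|}\,\d\lambda(t)$.

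The next step is to establish the correct scaling in~$\lambda$. From~\eqref{eq: f_i}, $f_i^\Gamma$ carries a prefactor $\lambda^{n-i}$, while the support constraints imposed by~$f^\Gamma$ force all $n$ vertices to lie within an $O(\epsilon)$-neighborhood of each other once one vertex is fixed; integrating the remaining $i$ free coordinates against $\d\lambda^{\otimes i}$ therefore produces $\|f_i^\Gamma\|_{L^2(\lambda)^{\circ i}}^2=O(\lambda^{2n-i})$, with the constant depending only on $\Gamma$ and $\epsilon$. By the isometry~\eqref{eq: isometry2}, the dominant contribution to the variance comes from $i=1$, so $\sigma_\lambda^2=\Theta(\lambda^{2n-1})$, and the higher chaoses $I_i(f_i^\Gamma/\sigma_\lambda)$ for $i\ge 2$ contribute terms of order $\lambda^{(1-i)/2}$, which encodes the expected $\lambda^{-1/2}$ rate.

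For the first term, I would expand $\langle DF,-DL^{-1}F\rangle_{L^2(\lambda)}=\sigma_\lambda^{-2}\sum_{i,j}i\int_X I_{i-1}(f_i^\Gamma(*,t))\,I_{j-1}(f_j^\Gamma(*,t))\,\d\lambda(t)$ using the product formula of Proposition~\ref{prop:chaos_prod}. The zeroth-order term in the resulting chaos expansion equals $\sigma_\lambda^{-2}\sum_i i\,(i-1)!\,\|f_i^\Gamma\|^2=\sigma_\lambda^{-2}\sum_i i!\,\|f_i^\Gamma\|^2=1$ by~\eqref{eq: isometry2}, so the $1$ is exactly cancelled and only higher chaoses remain. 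By Jensen's inequality and~\eqref{eq: isometry2} it suffices to bound their $L^2$-norm; each surviving term has the form $\sigma_\lambda^{-2}I_p(h)$ for $p\ge 1$ where $h$ is a symmetrized contraction $f_i^\Gamma\circ_a^b f_j^\Gamma$. The contraction reduces the number of integration variables and thus introduces an extra factor of order $\lambda^{-1}$ relative to the full tensor product, so each contribution has $\|\cdot\|^2=O(\lambda^{4n-2-(2n-1)-1})/\lambda^{4n-2}$, which is $O(\lambda^{-1})$; taking square roots yields the desired $O(\lambda^{-1/2})$ bound.

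For the second term, applying Cauchy--Schwarz and Proposition~\ref{prop:chaos_prod} reduces $\int\E{|D_tF|^2|D_tL^{-1}F|}\,\d\lambda(t)$ to a sum of integrals of chaos norms of products of three $I_{i-1}(f_i^\Gamma(*,t))$. Because each such triple product is supported, after integration against $\d\lambda(t)$, on configurations where three vertices from distinct copies of~$\Gamma$ share at least one common point in a neighborhood of $t$, the global $\lambda$-count decreases by at least one compared with the cube of the $\ell=i=j=k=1$ term, and, after dividing by $\sigma_\lambda^3=\Theta(\lambda^{3n-3/2})$, one obtains an $O(\lambda^{-1/2})$ contribution. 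Combining the two bounds with Theorem~\ref{thm_chi_mean_v2:1} yields the result. The main obstacle is the bookkeeping in the product formula: one must verify that every surviving contraction involves a strict reduction in the number of free integration variables, so that it is genuinely of smaller order in $\lambda$ than the leading term, and that the constants depend only on $n$, $\Gamma$, and $\epsilon$.
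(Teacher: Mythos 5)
Your proposal follows essentially the same route as the paper: standardize $G_\Gamma$, use the chaos decomposition of Lemma~\ref{lemma: chaos} to write $D_tF$ and $D_tL^{-1}F$, expand via Proposition~\ref{prop:chaos_prod}, and bound both terms of Theorem~\ref{thm_chi_mean_v2:1} by tracking powers of $\lambda$, with $\var{G_\Gamma}=\Theta(\lambda^{2n-1})$ and each contraction losing one power of $\lambda$ (your explicit scaling analysis is in fact more detailed than the paper's ``inspection of the power of $\lambda$''). The only blemish is the garbled exponent $\lambda^{4n-2-(2n-1)-1}$ in the first-term estimate, which does not equal the $O(\lambda^{-1})$ you correctly assert from the surrounding (valid) contraction argument; the conclusion and the overall $\lambda^{-1/2}$ rate stand.
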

\begin{proof}
  Let $F=\dfrac{G_\Gamma-\E{G_\Gamma}}{\sqrt{\var{G_\Gamma}}}$.
  Provided that $\Gamma$ has $n$ vertices, according to
  Lemma~\ref{lemma: chaos}, we have the following identities:
  \begin{align*}
    D_tF&=\frac{1}{\sqrt{{\var{G_\Gamma}}}}\sum_{i=1}^{n}iI_{i-1}(f_i^\Gamma(*,\,
    t)),\\
    -D_tL^{-1}F&=\frac{1}{\sqrt{\var{G_\Gamma}}}\sum_{i=1}^{n}
    I_{i-1}(f_i^\Gamma(*,\, t)),\\
    \var{G_\Gamma}&=\sum_{i=1}^{n}i!\ \left\Vert
      f_{i}^\Gamma\right\Vert_{L^2(X,\Lambda)^{\otimes i}}^2 .
  \end{align*}
  Hence, $\var{G_\Gamma}$ is a polynomial of degree $2n-1$ with
  respect to $\lambda$. From Proposition~\ref{prop:chaos_prod}, it is
  tedious but straightforward to see that $\langle DL^{-1}F,\, DF
  \rangle_{L^2(X,\Lambda)}$ is a polynomial of degree $2n-2$ with
  random coefficients depending on the integrals over $X^i$ of the
  $f_i^\Gamma$. According to Lemma \ref{lemma: chaos}, these
  coefficients are bounded almost-surely.
  Hence there exists a constant $c>0$ such that
  \begin{equation*}
    \E{\left| 1+ \langle DL^{-1}F,\, DF \rangle_{L^2(X,\Lambda)}\right| }\le c\, \lambda^{-1/2}.
  \end{equation*}
  The same kind of computations shows that
  \begin{equation*}
    \int_X\E{\vert D_xF\vert^2\vert D_xL^{-1}F\vert}\lambda \d x \le c\, \lambda^{-1/2}.
  \end{equation*}
  Then, the result follows from Theorem~\ref{thm_chi_mean_v2:1}.
\end{proof}

\providecommand{\bysame}{\leavevmode\hbox to3em{\hrulefill}\thinspace}
\providecommand{\MR}{\relax\ifhmode\unskip\space\fi MR }
\providecommand{\MRhref}[2]{%
  \href{http://www.ams.org/mathscinet-getitem?mr=#1}{#2} }
\providecommand{\href}[2]{#2}

\appendix
\section{Proofs of the second order moments}
\subsection{Proof of Theorem \ref{thm_cov_ksimp}}
By Lemma~\ref{lemma:f_chaos}, we can rewrite the covariance between
$N_k$ and $N_l$ with $l \leq k$:
\begin{eqnarray*}
  \cov{N_k,N_l}&=&\E{(N_k-\E{N_k})(N_l-\E{N_l})}\\
  &=&    \E{\frac{1}{k!}\sum_{i=1}^{k}{k\choose i}\lambda^{k-i}  I_i\left(f_i^k\right) 
    \frac{1}{l!}\sum_{j=1}^{l}{l\choose j}\lambda^{l-j} I_j\left(f_j^l\right)},
\end{eqnarray*}
where
\begin{eqnarray*}
  f_i^k (v_{k-i+1},\dots,v_{k})= \int_{X^{k-i}} \varphi_k^{(d)}(v_1,\dots,v_k) \d v_1\dots \d v_{k-i}.
\end{eqnarray*}
Using the isometry formula, given by Eq.~\eqref{eq: isometry}, we
have:
\begin{eqnarray*}
  \cov{N_k,N_l}&=& \frac{1}{k!l!}\sum_{i=1}^{l}{k\choose i}{l\choose i}\lambda^{k+l-2i} \E{I_i\left(f_i^k\right) I_i\left(f_i^l\right)} \\
  &=&\frac{1}{k!l!}\sum_{i=1}^{l}{k\choose i}{l\choose
    i}\lambda^{k+l-2i}i!\langle f_i^k, f_i^l\rangle_{L^2(X,\Lambda)^{\circ i}}\\
  &=&\sum_{i=1}^l \frac{1}{i!(k-i)!(l-i)!} \lambda^{k+l-2i} \langle
  f_i^k, f_i^l\rangle_{L^2(X,\Lambda)^{\circ i}}
  \label{eq:cov} 
\end{eqnarray*}
Hence, we are reduced to compute
\begin{multline*}
  \langle f_i^k , f_i^l \rangle_{L^2(X,\Lambda)^{\circ i}}=
  \int_{X^i} \left(\int_{X^{l-i}} \!\!
  \varphi_{l}^{(d)}(v_1,\dots,v_l)\d v_{i+1}\! \dots \! \d v_l\right)
  \\ \times \left(\int_{X^{k-i}} \!\! \varphi_k^{(d)}(v_1,\dots,v_k)
  \d v_{i+1}\!\dots\! \d v_{k}\right)\  \lambda \!\! \d v_{1}\dots \lambda \!\!
  \d v_{i}.
\end{multline*}
Let us denote $\mathcal{J}_2(m_1,m_2,m_{12})$ the integral on two
simplices of respectively $m_1+m_{12}$ and $m_2+m_{12}$ vertices with
$m_{12}>0$ common vertices:
\begin{multline*}
  \mathcal{J}_2(m_1,m_2,m_{12})=\\\int_{X^M}
  \varphi_{m_1+m_{12}}^{(d)} (v_1,\dots,
  v_{m_1+m_{12}})\varphi_{m_2+m_{12}}^{(d)} (v_{m_1+1},\dots,v_M)\d
  v_1 \dots \d v_M,
\end{multline*}
with $M=m_1+m_2+m_{12}$. Then we can rewrite:
\begin{eqnarray*}
  \langle f_i^k, f_i^l \rangle_{L^2(X,\Lambda)^{\circ i}}=\lambda^i \mathcal{J}_2(l-i,k-i,i),
\end{eqnarray*}
and it then remains to compute $\mathcal{J}_2(m_1,m_2,m_{12})$.

First, thanks to the tensorization property of the max-distance, we can write:
\begin{multline*}
  \mathcal{J}_2(m_1,m_2,m_{12})=\\\left( \int_{[0,\, a)^M}
    \varphi_{m_1+m_{12}}^{(1)} (x_1,\dots, x_{m_1+m_{12}})
    \varphi_{m_2+m_{12}}^{(1)} (x_{m_1+1},\dots,x_M)\d x_1 \dots \d
    x_M \right)^d
\end{multline*}
Let us split the integration domain of $\mathcal{J}_2$ into two parts:
\begin{itemize}
\item $A_1=\{ (x_1,\dots,x_M) \in \Delta_M^{(1)},\
  \varphi_M^{(1)}(x_1,\dots,x_M)=1 \}$, we recognize the integral
  calculated in the proof of Theorem \ref{th: simp_mean}:
  \begin{multline*}
    \int_{A_1} \varphi_{m_1+m_{12}}^{(1)} (x_1,\dots, x_{m_1+m_{12}})\varphi_{m_2+m_{12}}^{(1)} (x_{m_1+1},\dots,x_M)\d x_1 \dots \d x_M \\
    =M (2 \epsilon)^{M-1} a.
  \end{multline*}
\item $A_2=\{ (x_1,\dots,x_M) \in \Delta_M^{(1)},\ 
  \varphi_M^{(1)}(x_1,\dots,x_M)\neq1 \}$.
\end{itemize}

As in the proof of Theorem \ref{th: simp_mean}, we denote
$\zeta(x_1,\,\cdots,\, x_M)$ the index $i$ such that $
x_{i}<x_j<x_{i}+2\epsilon \text{ or }
x_{i}<x_j+a<x_{i}+2\epsilon$, which exists since $\epsilon < a/4$ and $m_{12}>0$. By
symmetry, we can reduce the analysis to the situation where  $\zeta(x_1,\,\cdots,\, x_M)=1$ and $x_1$ pertains to the first simplex of $m_1+m_{12}$.  We then order the
three sets of vertices such that:
\begin{eqnarray*}
  x_1< \dots < x_{m_1}, \quad
  x_{m_1+1}<\dots <x_{m_1+m_{12}}, \text{ and }
  x_{m_1+m_{12}+1}<\dots <x_{M}.
\end{eqnarray*}
Since $(x_1,\dots,x_M)$ belongs to $A_2$, we have $x_M-x_1>2\epsilon$.

Let us denote 
$J_{a}(f)(x)=\int_x^a f(u) \d u$ and by induction  $$J_{a}^{(m)}(f)(x) = \int_x^a J_a^{(m-1)}(f)(u)\d u.$$
Then we have by invariance by translation of the Lebesgue measure,
\begin{multline*}
  \int_{A_2} \varphi_{m_1+m_{12}}^{(1)} (x_1,\dots,
  x_{m_1+m_{12}})\varphi_{m_2+m_{12}}^{(1)} (x_{m_1+1},\dots,x_M)\d
  x_1 \dots \d x_M\\ = 2 m_1! m_2! m_{12}!\!\!\!  \int_0^a \!\!\!\!\!
  J_{x_1+2\epsilon}^{(m_1-1)} (\mathbf{1}) (x_1)\!\!\!
  \int_{x_1+2\epsilon}^{x_1+4\epsilon} \!\!\!\!\!\!\!\!\!\!\!\!\!\!
  (-J_{x_M-2\epsilon}^{(m_2-1)} (\mathbf{1}) (x_M))
  ~J_{x_1+2\epsilon}^{(m_{12})} (\mathbf{1}) (x_M-2\epsilon) \d x_M\!
  \d x_1.
\end{multline*}
We easily find that:
\begin{eqnarray*}
  J_{x_1+2\epsilon}^{(m_1-1)} (\mathbf{1}) (x_1) &=& \frac{(2\epsilon)^{m_1-1}}{(m_1-1)!}\\
  -J_{x_M-2\epsilon}^{(m_2-1)} (\mathbf{1}) (x_M) &=& \frac{(2\epsilon)^{m_2-1}}{(m_2-1)!},\\
  J_{x_1+2\epsilon}^{(m_{12})} (\mathbf{1}) (x_M-2\epsilon) &=& \frac{(x_1-x_M+4\epsilon)^{m_{12}}}{m_{12}!}\cdotp
\end{eqnarray*}
Thus we have:
\begin{multline*}
  \int_{A_2} \varphi_{m_1+m_{12}}^{(1)} (x_1,\dots,
  x_{m_1+m_{12}})\varphi_{m_2+m_{12}}^{(1)} (x_{m_1+1},\dots,x_M)\d
  x_1 \dots \d x_M \\
  =\frac{2m_1m_2}{m_{12}+1}(2\epsilon)^{M-1}a.
\end{multline*}
Then,
\begin{eqnarray*}
  \mathcal{J}_2(m_1,m_2,m_{12})=(m_1+m_2+m_{12} +\frac{2m_1m_2}{m_{12}+1})^da^d(2\epsilon)^{(m_1+m_2+m_{12}-1)d}
\end{eqnarray*}
concluding the proof.

\subsection{Proof of Theorem \ref{cor:var_chi}}
The variance of $\chi$ is given by:
\begin{eqnarray*}
  \var{\chi}&=&\E{(\chi-\E{\chi})^2}\\
  &=&\E{\left( \sum_{k=1}^{\infty}(-1)^k(N_k-\E{N_k} \right)^2}\\
  &=&\E{\sum_{i=1}^{\infty}\sum_{j=1}^{\infty}(-1)^{i+j}(N_i-\E{N_i})(N_j-\E{N_j})}.
\end{eqnarray*}
We remark that $N_i \leq {N_1^i}/{i!}$, thus
\begin{eqnarray*}
  &&\E{\sum_{i=1}^{\infty}\sum_{j=1}^{\infty} \vert (N_i-\E{N_i})(N_j-\E{N_j}) \vert }\\
  &&\le \E{\sum_{i=1}^{\infty}\sum_{j=1}^{\infty} N_iN_j
    +\E{N_i}\E{N_j} + N_i\E{N_j}+N_j\E{N_i}}\\ 
  &&\le \E{\sum_{i=1}^{\infty}\sum_{j=1}^{\infty} 
    \frac{N_1^{i+j}+\E{N_1^i}\E{N_1^j}+N_1^i\E{N_1^j}+N_1^j\E{N_1^i}}{i!j!}}\\
  && \le \E{e^{2N_1}+e^{2\E{N_1}}+2e^{N_1+\E{N_1}}}\\
  && < \infty.
\end{eqnarray*}
Thus, we can write
\begin{eqnarray*}
  \var{\chi}=\sum_{i=1}^{\infty}(-1)^i\sum_{j=1}^{\infty}(-1)^{j}\cov{N_i,N_j}.
\end{eqnarray*}
The result follows by Theorem \ref{thm_cov_ksimp}.

\subsection{Proof of Theorem \ref{cor:var_chi_1d}}
If $d=1$, according to Theorem~\ref{cor:var_chi}:
\begin{eqnarray}
  \label{eq:var_chi}
  \var{\chi}=\frac{a}{2\epsilon}\sum_{n=1}^{\infty}c_n^1(2\lambda \epsilon)^n.
\end{eqnarray}
Moreover, we define
\begin{eqnarray*}
  \alpha_n=\sum_{j=\left\lceil \frac{n+1}{2}\right\rceil}^n\left[2\sum_{i=n-j+1}^{j}
    \frac{(-1)^{i+j}n}{(n-j)!(n-i)!(i+j-n)!}-\frac{n}{(n-j)!^2(2j-n)!}\right],
\end{eqnarray*}
and $\beta_n= c_n^1-\alpha_n$.  It is well known that
\begin{eqnarray*}
  \sum_{i=0}^{2j-n}(-1)^i{j\choose i}=(-1)^{2j-n-1}{j-1\choose 2j-n},  
\end{eqnarray*}
using Stiffel's relation, we obtain:
\begin{eqnarray}
  \label{eq:alpha}
  \alpha_n &=& (-1)^n\frac{n}{n!}\sum_{j=\left\lceil \frac{n+1}{2}\right\rceil}^n\left[{n\choose j}2\sum_{i=0}^{2j-n}
    (-1)^i{j\choose i}+2(-1)^n{n\choose j}   \right]\nonumber\\
  &=&\frac{1}{(n-1)!}\sum_{j=\left\lceil \frac{n+1}{2}\right\rceil}^n\left[2{n\choose j}{j-1\choose n-j-1}-{n\choose j}{j\choose n-j}-2(-1)^n{n\choose j} \right]\nonumber\\
  &=&\frac{1}{(n-1)!}\sum_{j=\left\lceil \frac{n+1}{2}\right\rceil}^n\left[ {n\choose j} \left({j-1\choose n-j}-{j-1\choose n-j-1}\right)-2(-1)^n{n\choose j} \right].
\end{eqnarray}
The identity ${n\choose j}={n\choose n-j}$ allows us to write that
\begin{gather*}
  \sum_{j=\lceil (n+1)/2
    \rceil}^n (-2(-1)^n){n\choose j}=\sum_{j=0}^n {n\choose j}=2^n,\ \ n\mbox{ odd,}\\
  \sum_{j=\lceil (n+1)/2 \rceil}^n (-2(-1)^n){n\choose j}={n\choose
    n/2}+\sum_{j=0}^n -{n\choose j}=-2^n+{n\choose n/2},\ \ n\mbox{
    even.}
\end{gather*}
Since ${j-1\choose n-j}=0$ for $j<\left\lceil
  \frac{n+1}{2}\right\rceil$, we have
\begin{multline*}
  \sum_{j=\left\lceil \frac{n+1}{2}\right\rceil}^n{n\choose j}
  \left({j-1\choose n-j}-{j-1\choose n-j-1}\right)\\
  =\sum_{j=1}^n{n\choose j} \left({j-1\choose n-j}-{j-1\choose
      n-j-1}\right) -\binom{n}{n/2} \frac{1+(-1)^n}{2}
\end{multline*}
By known formulas on hypergeometric functions, we have that:
\begin{multline*}
  \sum_{j=\left\lceil \frac{n+1}{2}\right\rceil}^n{n\choose j}
  \left({j-1\choose n-j}-{j-1\choose n-j-1}\right) =(-1)^{n+1}
  -\binom{n}{n/2} \frac{1+(-1)^n}{2}
\end{multline*}
Then, we substitute these last two expressions in Eq.~\eqref{eq:alpha}
to obtain
\begin{eqnarray*}
  \alpha_n=(-1)^n\frac{(1 -2^n)\indicator{n\ge 1}}{(n-1)!},
\end{eqnarray*}
and thus
\begin{eqnarray*}
  \sum_{i=0}^{\infty}\alpha_nx^n=-x e^{-x}+2x e^{-2x}.
\end{eqnarray*}
Proceeding along the same line, $\beta_n$ is given by
\begin{eqnarray*}
  \beta_n&=&\sum_{j=\left\lceil \frac{n+1}{2}\right\rceil}^n\left[2\sum_{i=n-j+1}^{j}
    \frac{(-1)^{i+j}2(n-i)(n-j)}{(n-j)!(n-i)!(i+j-n+1)!}\right.\\ 
  &&\mbox{\hspace{6cm}}\left.-\frac{2(n-j)^2}{(n-j)!^2(2j-n+1)!}\right]\\
  &=&(-1)^n\left(\frac{(-2+2^n)\indicator{n\ge1}}{(n-1)!}-\frac{2\indicator{n\ge 2}}{(n-2)!} \right),
\end{eqnarray*}
and again we can simplify the power series
$\sum_{n=0}^{\infty}\beta_nx^n$ as
\begin{eqnarray*}
  \sum_{n=0}^{\infty}\beta_nx^n=2x e^{-x}-2(x+x^2) e^{-2x}.
\end{eqnarray*}
Then, substituting $\alpha_n$ and $\beta_n$ in Eq.~\eqref{eq:var_chi}
yields the result.

\section{Proof of the third order moment}

\subsection{Proof of Theorem \ref{thm_3mom}}
From Lemma ~\ref{lemma:f_chaos} , we know that the chaos decomposition
of the number of $(k-1)$-simplices is given by
\begin{eqnarray*}
  \widetilde{N_k}=\sum_{i=1}^k I_i(h_i),
\end{eqnarray*}
with
\begin{eqnarray*}
  h_i(v_1,\dots,v_i)=\frac{1}{k!}\binom{k}{i} \lambda^{k-i} \int_{X^{k-i}} \varphi_{k}^{(d)} (v_1,\dots,v_k)\d v_{i+1}\dots \d v_{k},
\end{eqnarray*}
and
\begin{eqnarray*}
  I_i(h_i)=\int_{X^i} h_i (v_1,\dots,v_i)\d \omega_{\Lambda}^{(i)}(v_1,\dots,v_i).
\end{eqnarray*}
Then, we define denoting $u=i+j-s$,
$$g_{i,j,s,t}= t! \binom{i}{t} \binom{j}{t}
\binom{t}{u-t}h_i \circ_t^{u-t} h_j$$ and using the chaos expansion
(cf Proposition~\ref{prop:chaos_prod}), we have
\begin{eqnarray*}
  \widetilde{N_k}^3&=&(\sum_{i=1}^k I_i(h_i))^3\\
  &=&\left( \sum_{i=1}^{k} \sum_{j=1}^{k} I_i(h_i)I_j(h_j) \right) \left( \sum_{l=1}^kI_l(h_l) \right)\\
  &=&\sum_{i,j,l=1}^{k} \sum_{s=|i-j|}^{i+j} \sum_{t=\lceil \frac{u}{2}
    \rceil}^{u\land i \land j} I_s(g_{i,j,s,t}) I_l(h_l).
\end{eqnarray*}
According to~\eqref{eq: isometry}, we obtain:
\begin{eqnarray*}
  \E{\widetilde{N_k}^3} &=& \E{\sum_{i,j=1}^{k} \sum_{s=|i-j|\vee
      1}^{i+j \wedge k} \sum_{t=\lceil \frac{u}{2}
      \rceil}^{u\land i \land j} I_s(g_{i,j,s,t}) I_s(h_s)}\\
  &=&\!\!\!\!\!\!\! \sum_{i,j=1}^{k} \sum_{s=|i-j|\vee 1}^{i+j\wedge k} \sum_{t=\lceil \frac{u}{2}
    \rceil}^{u\land i \land j} \int_{X^s} g_{i,j,s,t} h_s \lambda^s \d v_1\dots \d v_s\\
  &=&\!\!\!\!\!\!\! \sum_{i,j=1}^{k} \sum_{s=|i-j|\vee 1}^{i+j\wedge k}  \sum_{t=\lceil \frac{u}{2} \rceil}^{u \land i \land j} 
  \!\! \lambda^s t!\binom{i}{t} \binom{j}{t} \binom{t}{u-t} \!\!\! \int_{X^s} \! (h_i
  \circ_t^{u-t} h_j) h_s \d v_1 \!\dots \! \d v_s.
\end{eqnarray*}

We denote $\mathcal{J}_3 (k,i,j,s,t)$ the following integral:
\begin{multline}\label{eq_chi_mean_v3:1}
  \mathcal{J}_3(k,i,j,s,t)=
  \int_{X^{3k-t-s}} 
  \varphi_{k}^{(d)} (v_1,\dots, v_{k})
  \varphi_{k}^{(d)} (v_{1},\dots, v_{t}, 
  v_{k+1},\dots,  v_{2k-t})\\
  \varphi_{k}^{(d)} (v_1,\dots, v_{2t-i-j+s},
  v_{t+1},\dots,v_{i},v_{2k-j+1},\dots,v_{3k-t-s})
  \d v_1 \dots \d v_{3k-t-s},
\end{multline}
for $i,j,s$ and $t$ bounded as in the previous sums. We recognize the
integral on three $(k-1)$-simplices with $u-t$, $i-t$, and $j-t$
common vertices to only two of them, and $2t-u$ common vertices to the
three of them. Then, we can write:
\begin{multline*}
  \E{\widetilde{N_k}^3} =\sum_{i,j,=1}^{k} \sum_{s=|i-j|\vee
    1}^{i+j\wedge k} \sum_{t=\lceil \frac{u}{2} \rceil}^{u \land i
    \land j} \frac{\lambda^{3k-i-j} t!}{(k!)^3} \binom{k}{i}
  \binom{k}{j} \binom{k}{s} \binom{i}{t} \binom{j}{t}
  \binom{t}{u-t}\\
  \mathcal{J}_3 (k,i,j,s,t).
\end{multline*}
Finally, relaxing the boundaries on the sums conclude the proof.


\begin{thebibliography}{10}

\bibitem{armstrong} M.~A. Armstrong, \emph{Basic topology},
  Springer-Verlag, 1997.

\bibitem{bell} E.~T. Bell, \emph{Exponential polynomials}, Ann,
  Math. (1934), no.~35, 258--277.

\bibitem{MR1193616} Rabi~N. Bhattacharya and Jayanta~K. Ghosh, \emph{A
    class of {$U$}-statistics and asymptotic normality of the number
    of {$k$}-clusters}, J. Multivariate Anal. \textbf{43} (1992),
  no.~2, 300--330. 

\bibitem{bjorner} A.~Bj{\"o}rner, \emph{Topological methods}, (1995),
  1819--1872.

\bibitem{MR2279866} D.~Cohen-Steiner, H.~Edelsbrunner, and J.~Harer,
  \emph{Stability of persistence diagrams}, Discrete
  Comput. Geom. \textbf{37} (2007), no.~1, 103--120.  
\bibitem{verejones} D.~Daley and D.~Vere Jones, \emph{An introduction
    to the theory of point processes}, Springer, 2002.

\bibitem{silvacontrol} V.~de~Silva and R.~Ghrist,
  \emph{Coordinate-free coverage in sensor networks with controlled
    boundaries via homology}, International Journal of Robotics
  Research \textbf{25} (2006).

\bibitem{DF:JPS-11} L.~Decreusefond and E.~Ferraz, \emph{On the one
    dimensional {P}oisson random geometric graph}, Journal of
  Probability and Statistics (2011).

\bibitem{Decreusefond:2008jc} L.~Decreusefond, A.~Joulin, and N.~Savy,
  \emph{Upper bounds on {R}ubinstein distances on configuration spaces
    and applications}, Communications on stochastic analysis
  \textbf{4} (2010), no.~3, 377--399.

\bibitem{MR1949898} H.~Edelsbrunner, D.~Letscher, and A.~Zomorodian,
  \emph{Topological persistence and simplification}, Discrete
  Comput. Geom. \textbf{28} (2002), no.~4, 511--533, Discrete and
  computational geometry and graph drawing (Columbia, SC,
  2001). 

\bibitem{ghrist} R.~Ghrist, \emph{Coverage and hole-detection in
    sensor networks via homology}, Fouth International Conference on
  Information Processing in Sensor Networks (IPSN'05), UCLA, 2005,
  pp.~254--260.

\bibitem{greenbergharper} M.~J. Greenberg and J.~R. Harper,
  \emph{Algebraic topology: a first course}, Addison-Wesley, New York,
  NY, 1981.

\bibitem{hatcher} A.~Hatcher, \emph{Algebraic topology}, Cambridge
  University Press, 2002.

\bibitem{ito} Y.~Ito, \emph{Generalized poisson functionals},
  Probab. Theory Related Fields (1988), no.~77, 1--28.

\bibitem{MR2770552} M.~Kahle, \emph{Random geometric complexes},
  Discrete Comput. Geom. \textbf{45} (2011), no.~3, 553--573.

\bibitem{Kahle2010} M.~Kahle and E.~Meckes, \emph{{Limit theorems for
      Betti numbers of random simplicial complexes}}, (2010), 29.

\bibitem{kahn} J.M. Kahn, R.H. Katz, and K.S.J. Pister, \emph{Mobile
    networking for smart dust}, Intl. Conf. on Mobile Computing and
  Networking (Seattle, WA), august 1999.

\bibitem{lewis} F.L. Lewis, \emph{Wireless sensor networks}, ch.~2,
  John Wiley, New York, 2004.

\bibitem{munkres} J.~Munkres, \emph{Elements of algebraic topology},
  Addison Wesley, 1993.

\bibitem{Nourdin:2012fk} I.~Nourdin and G.~Peccati, \emph{Normal
    approximations with malliavin calculus: From stein's method to
    universality}, Cambridge University Press, 2012.

\bibitem{taqqu} G.~Peccati, J.L. Sol\'{e}, M.S. Taqqu, and F.~Utzet,
  \emph{Stein's method and normal approximation of poisson
    functionals}, Annals of Probability \textbf{38} (2010), no.~2,
  443--478.

\bibitem{MR1986198} M.~Penrose, \emph{Random geometric graphs}, Oxford
  Studies in Probability, vol.~5, Oxford University Press, Oxford,
  2003. 

\bibitem{Penrose2011} M.~D. Penrose and J.~E. Yukich, \emph{{Limit
      theory for point processes in manifolds}}, (2011).

\bibitem{pottie} G.J. Pottie and W.J. Kaiser, \emph{Wireless
    integrated network sensors}, Communications of the ACM \textbf{43}
  (2000), 51--58.

\bibitem{MR2531026} N.~Privault, \emph{Stochastic analysis in discrete
    and continuous settings with normal martingales}, Lecture Notes in
  Mathematics, vol. 1982, Springer-Verlag, Berlin,
  2009. 

\bibitem{Reitzner2011} M. Reitzner and M. Sch\"ulte, \emph{Central
    limit theorems for {U}-statistics of poisson point processes},
  Annals of Probability, to appear (2013).

\bibitem{rotman} J.J. Rotman, \emph{An introduction to algebraic
    topology (graduate texts in mathematics)}, Springer, July 1998.

\end{thebibliography}
\end{document}